\setlist[itemize]{leftmargin=4ex}
\theoremstyle{plain}
\newtheorem{theorem}{Theorem}
\newtheorem{lemma}[theorem]{Lemma}
\newtheorem{proposition}[theorem]{Proposition}
\newtheorem{definition}[theorem]{Definition}
\theoremstyle{remark}
\newtheorem{remark}[theorem]{Remark}
\numberwithin{equation}{section}
\numberwithin{theorem}{section}
\newcommand{\App}{P}                                           
\newcommand{\app}{M}										   
\newcommand{\Ritz}{\Pi}                                          
\newcommand{\id}{\mathrm{Id}}                                  
\newcommand{\norm}[1]{\| #1 \|}                     
\newcommand{\opnorm}[3]{\norm{#1}_{\ifx#2#3\mathcal{L}(#2)\else\mathcal{L}(#2,#3)\fi}}        
\newcommand{\R}{\mathbb{R}}                                   
\newcommand{\clsint}[2]{[{#1},{#2}]}
\newcommand{\aext}{\widetilde{a}}                              
\newcommand{\bext}{\widetilde{b}}                              
\newcommand{\Vext}{{\widetilde{V}}}                              
\newcommand{\vext}{\widetilde{v}}
\newcommand{\Appext}{\widetilde{\App}}                         
\DeclareMathOperator*{\infimum}{inf\vphantom{p}}               
\newcommand{\Cqopt}{C_{\mathrm{qopt}}}                         
\newcommand{\Cstab}{C_{\mathrm{stab}}}                         
\begin{document}

\title[Quasi-optimal nonconforming methods I]
{Quasi-optimal nonconforming methods for symmetric elliptic problems.
 I -- Abstract theory }

\author[A.~Veeser]{Andreas Veeser}
\author[P.~Zanotti]{Pietro Zanotti}


\begin{abstract}
We consider nonconforming methods for symmetric elliptic problems and characterize their quasi-optimality in terms of suitable notions of stability and consistency. The quasi-optimality constant is determined and the possible impact of nonconformity on its size is quantified by means of two alternative consistency measures. Identifying the structure of quasi-optimal methods, we show that their construction reduces to the choice of suitable linear operators mapping discrete functions to conforming ones.  Such smoothing operators are devised in the forthcoming parts of this work for various finite element spaces.
\end{abstract}

\maketitle

\section{Introduction}
%
%
%

Consider an elliptic boundary value problem, which can be cast in the abstract form
\begin{equation}
\label{sym-ell-prob}
 \text{find } u \in V \text{ such that }
 \forall v \in V \;\; a(u,v) = \langle \ell,v \rangle,
\end{equation}
where the bilinear form $a$ is a scalar product on the linear function space $V$. The Ritz-Galerkin method defines an approximation to $u$ as the solution $U$ of the problem where the infinite-dimensional space $V$ is replaced by a finite-dimensional subspace $S \subseteq V$.  C\'ea's lemma~\cite{Cea:64} reveals that $U$ is the best approximation to $u$ in $S$ with respect to the norm induced by $a$.  Remarkably, this holds irrespective of the regularity of the exact solution $u$.  In other words: the Ritz-Galerkin method is always optimal in $S$ with respect to the energy norm.

There are various generalizations of C\'ea's lemma. For Petrov-Galerkin methods applied to well-posed problems, Babu\v{s}ka \cite{Babuska:70} has shown the quasi-optimality property
\begin{equation}
\label{qo}
 \forall u \text{ solutions}
\quad
 \norm{u-U}
 \leq
 \Cqopt \inf_{s \in S} \norm{u-s}
\end{equation}
and, recently, Tantardini and Veeser \cite{Tantardini.Veeser:16} have shown that the best constant is
\begin{equation*}
 \Cqopt
 =
 \sup_{\sigma \in \Sigma}
  \frac{ \sup_{\norm{v} = 1} b(v,\sigma) }{ \sup_{\norm{s}=1} b(s,\sigma) },
\end{equation*}
where $b$ is the underlying bilinear form, $v$, $s$, and $\sigma$ vary, respectively, in the continuous trial space, the discrete trial space and the discrete test space.  This provides a rather general but still very strong result when the discrete spaces are conforming, that is, are subspaces of their continuous counterparts.

For classical nonconforming finite element methods (NCFEM) like the Crouzeix-Raviart or the Morley method and for Discontinuous Galerkin (DG) methods, such a strong result is not available, to our best knowledge.  Here the so-called second Strang lemma \cite{Berger.Scott.Strang:72} or variants serve as a replacement for C\'ea's lemma and the bound of the term associated with the consistency error is problematic.  It involves extra regularity,
\begin{itemize}
\item either of the solution $u$, which has to be taken from a strict compact subset of $V$, see, e.g., Brenner/Scott \cite{Brenner.Scott:08} and Di~Pietro/Ern \cite{Ern.DiPietro:12},
\item or, in the medius analysis initiated by Gudi \cite{Gudi:10}, of the load term $\ell$, which has to be taken from a strict compact subset of $V'$; see Brenner \cite{Brenner:15}. 
\end{itemize}
This extra regularity then obstructs a further bound by the best approximation error with respect to the energy norm in order to conclude quasi-optimality.

However, nonconforming discrete spaces are of interest because the `rigidity' of their conforming counterparts may cause problems in approximation, see, e.g., de~Boor/DeVore \cite{DeBoor.DeVore:83} and Babu\v{s}ka/Suri \cite{Babuska.Suri:92}, in stability, see Scott/Vogelius \cite{Scott.Vogelius:85}, or in accommodating structural properties like conservation.

\smallskip This article is the first in a project to close the gap of missing quasi-optimality for nonconforming methods. Here we consider continuous problems of the form \eqref{sym-ell-prob}, together with a rather big class of nonconforming methods.  This class contains in particular classical NCFEM, DG and other interior penalty methods. 

Our first main result states that quasi-optimality as in \eqref{qo} is equivalent to full algebraic consistency and full stability. Full algebraic consistency means that, whenever the exact solution happens to be in the discrete space, it is also the discrete solution. Notice that this is a quite weak property if the conforming part $S \cap V$ of the discrete space is small.  Full stability means that the discrete problem is stable for all loads, irrespective of their regularity. Moreover, we show that full stability holds if and only if the discrete problem reads
\begin{equation*}
 \text{find } U \in S \text{ such that }
 \forall \sigma \in S \;\; b(U,\sigma) = \langle \ell, E\sigma \rangle
\end{equation*}
where $b$ is the discrete bilinear form and $E:S \to V$ is a linear map, called smoother, and defined on the whole discrete space $S$.  Notice that, usually, nonconforming methods are used without a smoother and so full stability does not hold. It is thus not a surprise that previous results did not establish quasi-optimality with respect to the energy norm. Nonconforming methods with smoothing can be found in Arnold and Brezzi \cite{Arnold.Brezzi:85}, which observes increased stability, Brenner and Sung \cite{Brenner.Sung:05}, which presents fully stable methods, and Badia et al.\ \cite{Badia.Codina.Gudi.Guzman:14}, which contains also a partial quasi-optimality result. 

As a second main result, we determine the quasi-optimality constant, i.e.\ the best constant in \eqref{qo}, for a quasi-optimal nonconforming method:
\begin{equation*}
 \Cqopt
 =
 \sup_{\sigma \in S}
  \frac{ \sup_{\norm{v+s} = 1} a(v,E\sigma)+b(s,\sigma) }{ \sup_{\norm{s}=1} b(s,\sigma) }.
\end{equation*}
Notice that the enumerator handles the nonconformity by an extension interweaving data from the continuous and the discrete problem.  Moreover, we can determine $\Cqopt$ by two consistency measures generalizing algebraic consistency: one incorporating stability, one essentially independent of stability. 

These results reduce the construction of quasi-optimal nonconforming methods for \eqref{sym-ell-prob} to devising suitable smoothers $E$.  This is established for various nonconforming finite element spaces in our forthcoming works \cite{Veeser.Zanotti:17p2,Veeser.Zanotti:17p3}.

\section{Setting, stability and consistency}
\label{S:setup}
%
%
This section sets up the notations and notions for our analysis, individuating concepts of stability and consistency that are necessary for quasi-optimality. 

\subsection{Symmetric elliptic problems and nonconforming methods}
\label{S:setting}
%
%
We introduce the abstract boundary value problem and then a class of nonconforming methods, sufficiently large to host our discussion.

Let $V$ be an infinite-dimensional Hilbert space with scalar product $a(\cdot, \cdot)$ and \emph{energy norm} $\norm{\cdot} = \sqrt{a(\cdot,\cdot)}$.  Moreover, let $V'$ be the topological dual space of $V$, denote by $\left\langle \cdot, \cdot\right\rangle$ the pairing of $V$ and $V'$ and endow $V'$ with the \emph{dual energy norm}
$\norm{\ell}_{V'} := \sup_{v\in V, \norm{v} = 1} \langle\ell,v\rangle$. 
We consider the following \emph{`continuous' problem}: given $\ell \in V'$, find $u\in V$ such that
\begin{equation}
\label{ex-prob}
\forall v \in V
\quad
a(u, v) 
= 
\langle \ell, v \rangle.
\end{equation} 
In view of the Riesz representation theorem, this problem is well-posed in the sense of Hadamard and well-conditioned. In fact, if $A:V \to V'$, $v \mapsto a(v, \cdot)$ is the Riesz isometry of $V$, we have $u=A^{-1}\ell$ with
\begin{equation}
\label{isometry}
\norm{u} = \norm{\ell}_{V'}.
\end{equation}

Given a generic functional $\ell \in V'$, we are interested in `computable' approximations of the solution $u$ in \eqref{ex-prob}.  In other words, we are interested in approximating the linear operator $A^{-1}$  suitably.  Since $A^{-1}$ is bounded, one may want to approximate it by linear operators that are bounded, too.  However, in order to embed also existing methods in our setting, we consider more general linear operators $\app$, possibly unbounded, with finite-dimensional range $R(\app)$ and domain $D(\app)$ that is dense in $V'$.  We say that $\app$ is \emph{entire} whenever it can be directly applied to every instance of the continuous problem: $D(M) = V'$.  

We shall analyze methods that build upon the variational structure of \eqref{ex-prob} in the following manner. Let $S$ be a nontrivial, finite-dimensional linear space, which will play the role of $V$. 
We write $\left\langle \cdot, \cdot\right\rangle$ also for the pairing of $S$ and $S'$. Notice that we do not require $S \subseteq V$. As a consequence, $\langle\ell,\sigma\rangle$ and $a(s,\sigma)$ may be not defined for some $\ell\in V'$ and $s,\sigma\in S$.  We therefore introduce an operator $L: D(L) \subseteq V' \to S'$ and a counterpart $b : S \times S \to \R$ of $a$ and require:
\begin{itemize}
\item $L$ is linear, (possibly) unbounded, and densely defined, 
\item $b$ is bilinear and nondegenerate in that, for any $s\in S$, the property
$b(s,\sigma) = 0$ for all $\sigma \in S$ entails $s = 0$.
\end{itemize}
A method $\app$ with domain $D(\app) = D(L)$ is then defined by the following \emph{discrete problem}: given $\ell \in D(\app)$, find $\app\ell \in S$ such that
\begin{equation}
\label{disc-prob}
\forall \sigma \in S
\quad
b(\app \ell, \sigma)
=
\langle L\ell, \sigma \rangle.
\end{equation}

\begin{remark}[Computing discrete solutions]
\label{R:ComputingDiscreteSolutions}
If $\varphi_1,\dots,\varphi_n$ is some basis of $S$, \eqref{disc-prob} can be reformulated as a uniquely solvable linear system for the coefficients of $\app\ell$ with respect to $\varphi_1,\dots,\varphi_n$.  Consequently,
$\app\ell$ is computable, whenever  $b(\varphi_j,\varphi_i)$ and $\langle L\ell,\varphi_i\rangle$ can be evaluated for $i,j=1,\dots,n$.  Of course, it is desirable that the number of operations to compute $\app\ell$ is of optimal order $O(n)$.  A necessary condition for this is that the total number of operations for the aforementioned evaluations is of order $O(n)$.
\end{remark}

Methods $\app$ with the discrete problem \eqref{disc-prob} are given by the triplet $(S,b,L)$, whence we shall write also $\app=(S,b,L)$.  They may be called \emph{nonconforming linear variational methods} or, shortly, \emph{nonconforming methods}.  An important subclass are the \emph{conforming} ones, where the discrete space is contained in the continuous one: $S \subseteq V$. (As for the common usage of `unbounded' and `bounded' in operator theory, our usage of `nonconforming' and `conforming' is slightly inconsistent in that a conforming method is also nonconforming.)  Conformity allows choosing $b$ and $L$ by means of simple restriction:
\begin{equation}
\label{ConformingGalerkin}
b = a_{|S \times S}
\quad\text{and}\quad
\forall \ell \in V' \;\; L\ell = \ell_{|S}.
\end{equation}
In this case \eqref{disc-prob} is a (conforming) \emph{Galerkin method}. Truly nonconforming examples are DG methods and classical NCFEM.

Introducing the invertible map $B:S \to S'$, $s \mapsto b(s,\cdot)$, the method $\app$ is represented by the composition
\begin{equation}
\label{M=}
\app = B^{-1} L.
\end{equation}
Although the target function $u$ is usually unknown, the approximation operator
\begin{equation}
\label{Def-P}
\App := \app A = B^{-1} L A 
\end{equation}
with domain $D(\App) := A^{-1} D(\app)$ in $V$ will turn out to be a useful tool. Figure \ref{F:EntireNonconformingMethods} illustrates our setting in a commutative diagram for the special case of an entire method.

\begin{remark}[$S$ and surjectivity of $L$]
\label{R:surjectivity-of-L}
If $L$ is a linear, unbounded, densely defined operator from $V'$ to $S'$, we have $R(\app) \subseteq S$, with equality if and only if $L$ is surjective.  In addition, if $R(\app)$ is a proper subset of $S$, elementary linear algebra allows to reformulate $\app$ as a method over $R(\app)$.  Consequently, there is some ambiguity in the choice of $S$ if $L$ is not surjective and a slight abuse of notation in writing $\app=(S,b,L)$. 
\end{remark}

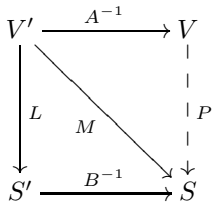
\begin{figure}	
	\[
	\xymatrixcolsep{4pc}		
	\xymatrixrowsep{4pc}		
	\xymatrix{
		V'   									
		\ar[d]^{L}
		\ar[r]^{A^{-1}}       
		\ar[rd]_{M} 					
		& V 									
		\ar@{-->}[d]^{\App} \\		
		S'										
		\ar[r]^{B^{-1}}				
		& S}               		
	\]
	\caption{\label{F:EntireNonconformingMethods}	Commutative diagram with solution operator $A^{-1}$, entire nonconforming variational method $\app$ given by $S$, $B$ and $L$, as well as induced approximation operator $\App$.}
\end{figure}

\subsection{Defining quasi-optimality, stability and consistency}
\label{S:stab-cons}
We now define the key notions of our analysis for nonconforming methods.

For each $\ell\in V'$, a nonconforming variational method $\app=(S,b,L)$ chooses an element of $S$ in order to approximate $u=A^{-1}\ell$.  To assess the quality of this choice, we assume that $a$ can be extended to a scalar product $\aext$ on $\Vext:=V+S$ and consider the \emph{extended energy norm}
\begin{equation*}
\norm{\cdot} := \sqrt{\aext(\cdot, \cdot)}
\quad\text{on }\Vext,
\end{equation*}
with the same notation as for the original one. Observe that $V$ and $S$ are closed subspaces of $\Vext$.

The best approximation error within $S$ to some function $v \in V$ is then given by $\inf_{s\in S} \norm{v-s}$.  Of course, it is desirable that a method is uniformly close to this benchmark, i.e.\ there holds an inequality that essentially reverses
\begin{equation*}
\forall u \in D(\App)
\qquad
\inf_{s\in S} \norm{u-s}
\leq
\norm{ u - \App u }.
\end{equation*}
\begin{definition}[Quasi-optimality]
\label{D:qopt}	
A nonconforming variational method $\app$ with discrete space $S$ and approximation operator $\App$ is \emph{quasi-optimal} whenever there exists a constant $C\geq 1$ such that
\[
\forall u \in D(\App)
\qquad
\norm{ u - \App u }
\leq
C \inf_{s\in S} \norm{u-s}.
\]
The quasi-optimality constant $\Cqopt$ of $M$ is then the smallest constant with this property.
\end{definition}

C\'ea's lemma \cite{Cea:64} shows that conforming Galerkin methods for \eqref{ex-prob} are quasi-optimal with $\Cqopt=1$ and that the associated approximation operator $\App=\app A$ is the bounded linear $a$-orthogonal projection (or idempotent) onto $S$: in fact, we have the celebrated Galerkin orthogonality
\begin{equation}
\label{Galerkin-orthogonality}
\forall u \in V, \sigma\in S \subseteq V
\qquad
a(u-\App u,\sigma) = 0.
\end{equation}
Before analyzing which of these properties still hold in the general case, let us discuss some necessary conditions for quasi-optimality and their consequences.

\begin{remark}[Quasi-optimal needs entire]
\label{R:qopt->entire}
Let $\App$ be the approximation operator of a quasi-optimal method $\app$.  Observe that the best error $\inf_{s\in S} \norm{\cdot-s}$ is a Lipschitz continuous function on $V$.  Therefore, quasi-optimality implies that also $\id_V-\App$ and $\App$ are Lipschitz continuous.  Since $D(\App)$ is dense in $V$ and $S$ complete, the operator $\App$ thus extends to $V$ in a continuous and unique manner.  As a consequence, $\app$ extends to $V'$ in a continuous and unique manner.   In other words: ignoring the aspect of computability, only entire methods can be quasi-optimal. 
\end{remark}

Notice that most classical NCFEM and DG methods are not defined as entire. Consequently, the simple observation in Remark \ref{R:qopt->entire} questions that these methods can be quasi-optimal. This doubt will be confirmed in Remark~\ref{R:failure-of-idS} below.

%
%
\medskip Generally speaking, stability is associated with the property that small input perturbations result in small output perturbations. The form of the discrete problem \eqref{disc-prob} suggests adopting the viewpoint that input is taken from a subset of $V'$. Since \eqref{disc-prob} is linear, stability then amounts to some operator norm of $M$.  Notice that this differs from the common viewpoint that stability is connected solely with an operator norm of $B^{-1}$, i.e.\ taking input from $S'$. In the following definition, we consider perturbations and measure them as suggested by the setting of the continuous problem.

\begin{definition}[Full stability]
\label{D:stab}
We say that $M$ is \emph{fully stable} whenever $D(M)=V'$ and, for some constant $C\geq0$, we have
\begin{equation*}
%
	\forall \ell \in V'
	\qquad
	\norm{\app \ell}
	\leq
	C \norm{\ell}_{V'}.
\end{equation*}
The smallest such constant is the stability constant $\Cstab$ of $\app$.
\end{definition}

Full stability may go beyond the need for practical computations, but it relates to the previous notions in the following manner.

\begin{remark}[Fully stable, quasi-optimal and entire]
\label{R:FullStability}
The approximation operator $P$ of a quasi-optimal method satisfies
\begin{equation*}
	\norm{Pu}
	\leq
	\norm{u} + \norm{Pu-u}
	\leq
	(1+\Cqopt) \norm{u}
	=
	(1+\Cqopt) \norm{Au}_{V'}
\end{equation*}
for all $u\in V$, using $0\in S$, \eqref{isometry} and Remark \ref{R:qopt->entire}. In view of \eqref{Def-P}, full stability is thus necessary for quasi-optimality.  Furthermore,  full stability itself requires that the method is entire in the vein of Remark \ref{R:qopt->entire}.
\end{remark}

%
%
\medskip Roughly speaking, consistency measures to what extent the exact solution verifies the discrete problem.
%
To this end, one usually substitutes in the discrete problem the discrete solution by the exact one and investigates a possible defect.  Here nonconformity entails that the forms $b$ and $L$ cannot be defined by simple restriction and so creates the following issues concerning trial and test space:
\begin{itemize}
	\item In which sense can we plug a generic exact solution $u$ into the discrete problem? Does this require an extension of $b$ or a representative of $u$ in $S$? 
	\item How do we relate the condition associated with a nonconforming test function $\sigma \in S\setminus V$ in \eqref{disc-prob} to the conditions given by the continuous test functions in \eqref{ex-prob}?
\end{itemize}
These issues are usually tackled with the help of regularity assumptions on the exact solution, see, e.g., Arnold et al.\ \cite{Arnold.Brezzi.Cockburn.Marini:02}, or only on data, see Gudi \cite{Gudi:10}.   The following definition takes a different approach within our non-asymptotic setting. 
\begin{definition}[Full algebraic consistency]
\label{D:cons}
The method $\app$ is \emph{fully algebraically consistent} whenever $D(M)=V'$ and 
\begin{equation}
\label{Consistency}
	\forall u\in V \cap S, \sigma \in S
	\quad
	b(u,\sigma) = \langle LAu,\sigma \rangle.
\end{equation}
\end{definition}
Conforming Galerkin \eqref{ConformingGalerkin} methods are fully algebraically consistent.  Let us discuss further aspects of full algebraic consistency.

\begin{remark}[Full algebraic consistency and approximation operator]
\label{R:reformulations-of-consistency}
In view of the discrete problem \eqref{disc-prob} and the definition \eqref{Def-P} of the approximation operator, \eqref{Consistency} is equivalent to $b(u - \App u,\sigma) = 0$ for all $u\in V \cap S, \sigma \in S$.
Since $b$ is nondegenerate, the consistency condition \eqref{Consistency} is therefore equivalent to
\begin{equation}
\label{P|V cap S}
	\forall u\in V \cap S
\quad
	\App u = u.
\end{equation}
In other words: full algebraic consistency means that whenever the exact solution is discrete, it is the discrete solution.  The advantage of \eqref{Consistency} is that it is directly formulated in terms of the originally given data $A$, $S$, $b$ and $L$.  In Lemma \ref{L:ConsistencyWithExtension} and Theorem \ref{T:qopt-smoothing} below, we will present further equivalent formulations.
\end{remark}

\begin{remark}[Quasi-optimal needs fully algebraically consistent]
\label{R:QuasiOptRequiresFullConsistency}
In light of Remark~\ref{R:qopt->entire}, a quasi-optimal method $M$ is entire and so its approximation operator $P$ is defined on all $V$.  For any $u \in V\cap S$, the best error in $S$ vanishes and so $Pu=u$.  Consequently, $M$ is fully algebraically consistent.
\end{remark}

Definition \ref{D:cons} involves only exact solutions from the discrete space $S$, which may be a quite small set. Indeed, for example, when applying the Morley method to the biharmonic problem, the intersection $S \cap V$ has poor approximation properties for certain mesh families; see \cite[Theorem 3]{DeBoor.DeVore:83} and \cite[Remark~3.11]{Veeser.Zanotti:17p2}.  Other consistency notions of algebraic type involving more exact solutions may thus appear stronger than Definition \ref{D:cons}.  The following lemma sheds a different light on this.

\begin{lemma}[Full algebraic consistency with extension]
\label{L:ConsistencyWithExtension}
Let the method $M$ be fully algebraically consistent and set $\Vext := V + S$. Then there exists a unique bilinear form $\bext$ that extends $b$ as well as $\langle LA\cdot,\cdot\rangle$ on $\Vext \times S$. 
\end{lemma}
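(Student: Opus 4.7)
The natural candidate for $\bext$ is forced by the requirements: any $w\in\Vext$ admits a (non-unique) decomposition $w=v+s$ with $v\in V$ and $s\in S$, and a bilinear extension of $b$ and $\langle LA\cdot,\cdot\rangle$ must send $(w,\sigma)$ to $\langle LAv,\sigma\rangle+b(s,\sigma)$ by linearity in the first argument. My plan is to take this formula as a definition and verify it really produces a bilinear form, which in turn automatically yields uniqueness.

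Concretely, I would set
\begin{equation*}
 \bext(v+s,\sigma) := \langle LAv,\sigma\rangle + b(s,\sigma)
 \qquad\text{for } v\in V,\ s\in S,\ \sigma\in S,
\end{equation*}
and check four things in order. First, well-definedness: if $v+s=v'+s'$ with $v,v'\in V$ and $s,s'\in S$, then $w:=v-v'=s'-s$ lies in $V\cap S$, and the required identity
\begin{equation*}
 \langle LAv,\sigma\rangle+b(s,\sigma)=\langle LAv',\sigma\rangle+b(s',\sigma)
\end{equation*}
rearranges to $\langle LAw,\sigma\rangle=b(w,\sigma)$, which is exactly the full algebraic consistency hypothesis \eqref{Consistency}. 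Note that $LA$ is defined on all of $V$ because full algebraic consistency forces $D(M)=V'$, hence $D(L)=V'$. Second, bilinearity in the first slot follows from the linearity of $L$, $A$ and the bilinearity of $b$ together with the well-definedness just established; bilinearity in the second slot is immediate from the linearity of the pairing and the bilinearity of $b$. Third, the extension property: choosing the decomposition $s=0+s$ gives $\bext(s,\sigma)=b(s,\sigma)$ for $s\in S$, and $v=v+0$ gives $\bext(v,\sigma)=\langle LAv,\sigma\rangle$ for $v\in V$.

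For uniqueness, suppose $\bext_1$ and $\bext_2$ both extend $b$ and $\langle LA\cdot,\cdot\rangle$ and are bilinear on $\Vext\times S$. Then for any $w=v+s\in\Vext$ and $\sigma\in S$, linearity in the first argument forces
\begin{equation*}
 \bext_i(w,\sigma)=\bext_i(v,\sigma)+\bext_i(s,\sigma)=\langle LAv,\sigma\rangle+b(s,\sigma),
\end{equation*}
so $\bext_1=\bext_2$. The only nontrivial step is the well-definedness, and the point of the argument is precisely that full algebraic consistency is exactly the compatibility condition that allows the two partial definitions to be glued along $V\cap S$; without it such an extension would generally fail to exist.
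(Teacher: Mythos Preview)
Your proof is correct and follows essentially the same approach as the paper: define $\bext(v+s,\sigma):=\langle LAv,\sigma\rangle+b(s,\sigma)$, use full algebraic consistency to show this is independent of the decomposition, and deduce uniqueness from linearity in the first argument together with the extension requirements. The paper's argument is slightly terser (it does not spell out bilinearity or the extension property), but the substance is identical.
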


\begin{proof}
Observe that the left-hand side of \eqref{Consistency} is defined for all $u \in S$, while its right-hand side is defined in particular for all $u \in V$.  We exploit this in order to extend $b$. Given $\vext \in \Vext$ and $\sigma \in S$, we write $\vext = v + s$ with $v \in V$ and $s \in S$ and set
\begin{equation}
\label{bext}
	\bext(\vext,\sigma)
	:=
	\langle LAv,\sigma \rangle + b(s,\sigma).
\end{equation}
Thanks to \eqref{Consistency}, $\bext$ is well-defined.  Indeed, if $v_1 + s_1 = v_2 + s_2$ with $v_1, v_2 \in V$ and $s_1, s_2 \in S$, we have $v_1-v_2 = s_2 - s_1 \in V \cap S$ and therefore \eqref{Consistency} yields $\langle LA(v_1-v_2),\sigma \rangle = - b(s_1-s_2,\sigma)$, which in turn ensures
\begin{equation*}
	\langle LA v_1, \sigma \rangle + b(s_1,\sigma)
	=
	\langle LA v_2, \sigma \rangle + b(s_2,\sigma).
\end{equation*}
To show uniqueness of the extension, let $\widetilde{\beta}$ be another common extension of $b$ and $\langle LA\cdot, \cdot \rangle$. Given $\vext \in \Vext$ and $\sigma \in S$, we write $\vext = v + s$ with $v \in V$ and $s \in S$ as before and infer
\begin{equation*}
	\widetilde{\beta}(\vext,\sigma)
	=
	\widetilde{\beta}(v,\sigma) + \widetilde{\beta}(s,\sigma)
	=
	\langle LAv,\sigma \rangle + b(s,\sigma)
	=
	\bext(\vext,\sigma)
	\end{equation*}
and the proof is complete.
\end{proof}
 
Notice that full algebraic consistency differs from the usual consistency, as, e.g. in Arnold \cite{Arnold:15} also for the following aspects: on the one hand, it is stronger in that it requires an algebraic identity instead of a limit.  On the other hand, it does not involve approximation properties of the underlying discrete space.  In fact, our purpose here is to identify the part of consistency that is necessary for quasi-optimality. As a consequence, algebraic consistency and stability alone are not sufficient for convergence.
  
\medskip Let us conclude this section by introducing a subclass of natural candidates for fully algebraically consistent methods.  A method $\app = (S,b,L)$ is a \emph{nonconforming Galerkin method} whenever
\begin{equation}
\label{NonConformingGalerkinMethod}
b_{|S_C \times S_C} = a_{|S_C \times S_C}
\quad\text{and}\quad
\forall \ell \in D(L) \;\;
L\ell_{|S_C} = \ell_{|S_C},
\end{equation}
where $S_C = S \cap V$ is the conforming subspace of the discrete space $S$. Thus, a nonconforming Galerkin method is constrained by restriction where applicable.  Notice that:
\begin{itemize}
	\item In contrast to conforming Galerkin methods, nonconforming ones are not completely determined by the continuous problem and the discrete space.
	\item The condition \eqref{NonConformingGalerkinMethod} readily yields
	\begin{equation*}
	\forall u, \sigma \in S \cap V
	\quad
	b(u, \sigma) = \langle LAu, \sigma \rangle,
	\end{equation*}
	which is weaker than full algebraic consistency in that less test functions are involved.
\end{itemize}
For example, classical NCFEM, DG and $C^0$ interior penalty methods are nonconforming Galerkin methods. 

\section{Characterizing quasi-optimality}
\label{S:qopt}
The purpose of this section is twofold. First, we show that full algebraic consistency and full stability are not only necessary but also sufficient for quasi-optimality. Second, we assess the possible impact of nonconformity on the quasi-optimality constant.
\subsection{Quasi-optimality and extended approximation operator}
\label{S:to-qopt}
%
To show that full algebraic consistency and full stability imply quasi-optimality,
we start with the following short proof of a `partial' quasi-optimality, which motivates a new tool for the analysis of nonconforming methods. 

Assume that $\App$ is the approximation operator of a fully algebraically consistent and a fully stable method. Rewriting \eqref{P|V cap S} as
\begin{equation}
\label{I-P-and-ScapV}
 \forall v\in V, s \in S \cap V
\quad
 v-Pv
 =
 (\id_V-P)(v-s)
\end{equation}
and exploiting that full stability entails the boundedness of $\App$, we can deduce quasi-optimality with respect to the conforming part $S\cap V$ of the discrete space $S$:
\begin{equation*}
 \norm{v-Pv}
 \leq
 \opnorm{\id_V-\App}{V}{S} \inf_{s \in S\cap V} \norm{v-s}.
\end{equation*}
Note that we do not obtain quasi-optimality with respect to the whole discrete space, just because $\App s = s$ is not available for general $s\in S$.  In particular, $\App s$ is not defined for general $s \in S$. We therefore explore an appropriate extension of $\App$.

For this purpose, we use the following facts on linear projections; cf., e.g., Buckholtz \cite{Buckholtz:00}. Let $K$ and $R$ be subspaces of a Hilbert space $H$ with scalar product $(\cdot,\cdot)_H$ and induced norm $\norm{\cdot}_H$. The spaces $K$ and $R$ provide a direct decomposition of $H$, $H = K \oplus R$, if and only if there exists a unique linear projection $Q$ on $H$ with kernel $N(Q)=K$ and range $R(Q)=R$. Then $\id_H-Q$ is the linear projection with kernel $R$ and range $K$. As a consequence of the closed graph theorem,  $R$ and $K$ are closed if and only if $Q$ is bounded if and only if $\id_H - Q$ is bounded.
\begin{lemma}[Extended approximation operator]
\label{L:Pext}
Assume that	the approximation operator $\App$ verifies $\App|_{S \cap V} = \id_{S \cap V}$ and is bounded.  Then there exists a unique bounded linear projection $\Appext$ from $\Vext$ onto $S$ satisfying $\Appext_{|V} = \App$.
\end{lemma}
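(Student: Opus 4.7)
My plan is to construct $\Appext$ by the explicit formula $\Appext(v + s) := \App v + s$ for $v \in V$ and $s \in S$. Uniqueness falls out first and almost for free: any projection onto $S$ that extends $\App$ must satisfy $\Appext|_S = \id_S$ (since a projection is the identity on its range), and together with $\Appext|_V = \App$, linearity forces the operator to coincide with the above formula on every element of $\Vext = V + S$.

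For existence, the first step I would tackle is well-definedness, since the decomposition $\vext = v + s$ is non-unique whenever $V \cap S \neq \{0\}$. If $v_1 + s_1 = v_2 + s_2$ with $v_i \in V$ and $s_i \in S$, then $v_1 - v_2 = s_2 - s_1$ lies in $V \cap S$, and the assumption $\App|_{V \cap S} = \id_{V \cap S}$ yields $\App v_1 - \App v_2 = s_2 - s_1$, so $\App v_1 + s_1 = \App v_2 + s_2$. Linearity, $\Appext|_V = \App$, and $\Appext(\Vext) \subseteq S$ are then transparent; together with $\Appext|_S = \id_S$ (obtained by setting $v = 0$) they make $\Appext$ a linear projection onto $S$.

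The heart of the proof is boundedness, where I would exploit the finite dimensionality of $S$. The idea is to pick an algebraic complement $S_1$ of $V \cap S$ inside $S$, so that $S = (V \cap S) \oplus S_1$. Then $V \cap S_1 = \{0\}$ and $\Vext = V + S = V + S_1$, so $\Vext = V \oplus S_1$ is a direct decomposition into closed subspaces ($V$ is closed in $\Vext$ by the setup in Section~\ref{S:stab-cons}, and $S_1 \subseteq S$ is finite-dimensional). The projection facts cited just before the lemma then supply bounded linear projections $\pi_V$ and $\pi_{S_1}$ from $\Vext$ onto $V$ and $S_1$ respectively. Since $\pi_{S_1}\vext \in S$, the well-definedness step gives $\Appext \vext = \App(\pi_V \vext) + \pi_{S_1}\vext$, a composition of bounded operators.

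I expect the main obstacle to be the well-definedness check: it is the only place where the hypothesis $\App|_{V \cap S} = \id_{V \cap S}$ is essential, and without it the natural formula simply fails to define a function. Everything else is bookkeeping built on standard facts about projections and on the closedness of $V$ and $S$ in $\Vext$.
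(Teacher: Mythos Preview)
Your argument is correct. Uniqueness and well-definedness are handled exactly as in the paper's proof: both rely on the observation that the conditions $\Appext_{|V}=\App$ and $\Appext_{|S}=\id_S$ force the formula $\Appext(v+s)=\App v+s$, and that the hypothesis $\App_{|V\cap S}=\id_{V\cap S}$ makes this formula independent of the chosen decomposition.

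For boundedness the two proofs diverge slightly in the auxiliary decomposition used. The paper first splits off $S\cap V$ orthogonally, setting $Y:=(S\cap V)^\perp$, and then uses the (nonorthogonal) projection $Q$ on $Y$ with range $V\cap Y$ and kernel $S\cap Y$ to write
\[
\Appext=\App\,Q\,\Ritz_Y+\id_{\Vext}-Q\,\Ritz_Y,
\]
concluding boundedness from the closedness of $V\cap Y$ and $S\cap Y$. You instead choose an algebraic complement $S_1$ of $V\cap S$ inside $S$, obtain the direct decomposition $\Vext=V\oplus S_1$ with both summands closed, and write $\Appext=\App\,\pi_V+\pi_{S_1}$. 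Your route is arguably more economical: it invokes the closed-graph fact quoted before the lemma just once, and the finite-dimensionality of $S_1\subseteq S$ makes its closedness immediate. The paper's route has the minor advantage of keeping everything orthogonal up to the last step, which fits the Hilbert-space setting a bit more symmetrically, but both arguments rest on the same cited projection facts and yield the same conclusion.
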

\begin{proof}
First, we observe that $\Appext$ has to satisfy
\begin{equation}
\label{properties-of-Pext}
	\Appext:\Vext \to S \text{ linear},
	\quad
	\Appext_{|V} = P
	\quad\text{and}\quad
	\Appext_{|S} = \id_S.
\end{equation}
Since $\Vext = V + S$, linear extension entails that there is at most one operator satisfying \eqref{properties-of-Pext} and we are thus led to consider the following definition: given $\vext\in\Vext$, choose $v \in V$ and $s \in S$ such that $\vext = v+s$ and set
\begin{equation}
\label{Pext}
	\Appext \vext
	:= 
	\App v + s.
\end{equation}
The assumption $\App_{|S \cap V} = \id_{S \cap V}$ means that the two identities in \eqref{properties-of-Pext} are compatible and so guarantees that $\Appext$ is well-defined; compare with the definition of $\bext$ in the proof of Lemma~\ref{L:ConsistencyWithExtension}. 

In order to show the boundedness of $\Appext$, we represent it in terms of $P$ and the following operators, corresponding to an appropriate choice of $v$ and $s$ in \eqref{Pext}.  Let $\Ritz_Y$ be the $\aext$-orthogonal projection onto $Y := (S\cap V)^\perp$ and let $Q$ be the linear projection on $Y$ with range $V\cap Y$ and kernel $S \cap Y$.  We then have
\begin{equation*}
 \Appext
 =
 P Q \Ritz_Y
 +
 (\id_Y - Q) \Ritz_Y
 +
 (\id_{\Vext} - \Ritz_Y)
 =
 P Q \Ritz_Y
 +
 \id_{\Vext} -  Q \Ritz_Y.
\end{equation*}
Since the subspaces $S$, $V$, and $Y$ are closed, the projections $\Ritz_Y$ and $Q$ are bounded.  Consequently, the boundedness of $\App$ implies the boundedness of its extension $\Appext$.
\end{proof}

Using the extended approximation operator $\Appext$, the proof of the announced characterization of quasi-optimality is quite simple. Notice also that the quantitative aspect of our first main result highlights the importance of $\Appext$. 
\begin{theorem}[Characterization of quasi-optimality]
\label{T:qopt}
A nonconforming method is quasi-optimal if and only if it is fully algebraically consistent and fully stable. 

Moreover, for any quasi-optimal method, we have
\begin{equation*}
\Cqopt
=
\opnorm{\Appext}{\Vext}{\Vext} 
\end{equation*}
where $\Appext$ is the extended approximation operator from Lemma \ref{L:Pext}.
\end{theorem}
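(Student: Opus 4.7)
The plan is to split the claim into three pieces: necessity of the two conditions, sufficiency via the extended operator $\Appext$, and the sharp identification of $\Cqopt$ with its operator norm. Necessity is essentially already in hand: Remark~\ref{R:QuasiOptRequiresFullConsistency} gives full algebraic consistency of any quasi-optimal method, and Remark~\ref{R:FullStability} gives full stability. So I would begin by citing these remarks and then turn to the converse.

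For sufficiency, I would assume both conditions and first observe that Remark~\ref{R:reformulations-of-consistency} provides $\App|_{S \cap V} = \id_{S \cap V}$, while $\App = \app A$, the isometry~\eqref{isometry}, and the stability bound combine to show that $\App$ is bounded on $V = D(\App)$. Lemma~\ref{L:Pext} then furnishes a bounded linear projection $\Appext : \Vext \to S$ with $\Appext|_V = \App$ and $\Appext|_S = \id_S$. Since $\Appext$ is linear with $\Appext s = s$ for every $s \in S$, one gets, for any $u \in V$ and any $s \in S$,
\begin{equation*}
 u - \App u
 = (u - s) - (\App u - s)
 = (u - s) - \Appext(u - s)
 = (\id_{\Vext} - \Appext)(u - s).
\end{equation*}
Taking norms and the infimum over $s \in S$ yields quasi-optimality together with $\Cqopt \leq \opnorm{\id_{\Vext} - \Appext}{\Vext}{\Vext}$.

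To sharpen this to an equality and then convert it into the claimed formula, I would proceed in two further steps. For the reverse inequality, I would decompose any $\vext \in \Vext$ as $\vext = v + s_0$ with $v \in V$ and $s_0 \in S$; then $(\id_{\Vext} - \Appext)\vext = v - \App v$, and, using $-s_0 \in S$ as a competitor, $\inf_{s \in S}\norm{v - s} \leq \norm{v + s_0} = \norm{\vext}$, so quasi-optimality of $\app$ gives $\norm{(\id_{\Vext} - \Appext)\vext} \leq \Cqopt \norm{\vext}$ and hence $\opnorm{\id_{\Vext} - \Appext}{\Vext}{\Vext} \leq \Cqopt$. This already identifies $\Cqopt = \opnorm{\id_{\Vext} - \Appext}{\Vext}{\Vext}$. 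The final step is to invoke the identity $\opnorm{Q}{H}{H} = \opnorm{\id_H - Q}{H}{H}$, valid for every nontrivial bounded linear projection $Q$ on a Hilbert space $H$ (the Kato--Szyld lemma), applied to $Q = \Appext$ on $H = \Vext$. The main conceptual obstacle is precisely this last step: passing from $\id_{\Vext} - \Appext$ to $\Appext$ is a genuinely Hilbertian fact resting on the extended scalar product $\aext$ on $\Vext$, and it is what allows the symmetry of the underlying problem to enter the quantitative statement in an essential way.
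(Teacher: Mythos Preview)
Your proposal is correct and follows essentially the same route as the paper: necessity via Remarks~\ref{R:FullStability} and~\ref{R:QuasiOptRequiresFullConsistency}, sufficiency and the bound $\Cqopt \leq \opnorm{\id_{\Vext}-\Appext}{\Vext}{\Vext}$ via the identity $(\id_{\Vext}-\Appext)(v-s)=v-\App v$, the reverse inequality by decomposing $\vext=v+s_0$ and using $-s_0$ as a competitor, and finally the Hilbert-space projection identity $\opnorm{\Appext}{\Vext}{\Vext}=\opnorm{\id_{\Vext}-\Appext}{\Vext}{\Vext}$, which the paper attributes to Buckholtz~\cite{Buckholtz:00} and Xu/Zikatanov~\cite{Xu.Zikatanov:03} rather than calling it Kato--Szyld. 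The only point you leave implicit is the nontriviality $0\neq\Appext\neq\id_{\Vext}$ needed for that last identity, which follows from $\{0\}\subsetneq S\subsetneq\Vext$.
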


\begin{proof}
Remarks \ref{R:FullStability} and \ref{R:QuasiOptRequiresFullConsistency} show that quasi-optimality implies full algebraic consistency and full stability.

To show the converse, consider any fully algebraically consistent and fully stable nonconforming method. We simply follow the lines of the corresponding part of the proof of Tantardini/Veeser \cite[Theorem~2.1]{Tantardini.Veeser:16}, replacing $\App$ by $\Appext$ and exploiting the following generalization of \eqref{I-P-and-ScapV}:
\begin{equation}
\label{I-Appext}
\forall v\in V, s\in S
\quad
(\id_\Vext - \Appext) (v-s)
=
(\id_V - \App) v.
\end{equation} Given arbitrary $v\in V$ and $s\in S$, we thus derive
\begin{equation*}
	\norm{v-\App v}
	=
	\norm{(v-s) - \Appext(v-s)}
	\leq
	\opnorm{\id_{\Vext} - \Appext}{\Vext}{\Vext} \norm{v-s}.
\end{equation*}
Taking the infimum over all $s\in S$ and then the supremum over all $v\in V$, we obtain
\begin{equation}
\label{Copt<=}
	\Cqopt
	\leq
	\opnorm{\id_{\Vext} - \Appext}{\Vext}{\Vext}
\end{equation}
and see that $\app$ is quasi-optimal because $\Appext$ is bounded.

To verify, the identity for $\Cqopt$, let us first see that \eqref{Copt<=} is actually an equality. In fact, for $v \in V$ and $s \in S$, we derive
\begin{equation*}
\norm{(\id_{\Vext} - \Appext)(v+s)}
=
\norm{v-\App v}
\leq
\Cqopt \inf \limits_{\hat{s} \in S}\norm{v-\hat{s}}
\leq
\Cqopt
\norm{v+s}
\end{equation*}
using \eqref{I-Appext} again.  We thus obtain the converse to \eqref{Copt<=} by taking the supremum over all $v\in V$ and $s \in S$.

Moreover, since $\{0\}\subsetneq S\subsetneq \Vext$, the extended approximation operator $\Appext$ is a bounded linear idempotent with $0 \neq \Appext = \Appext^2 \neq \id_{\Vext}$ on the Hilbert space $\Vext$. We therefore can apply Buckholtz \cite[Theorem 2]{Buckholtz:00} or Xu/Zikatanov \cite[Lemma~5]{Xu.Zikatanov:03}  
and conclude 
\begin{equation}
\label{Cqopt=}
\Cqopt
=
\opnorm{\id_{\Vext} - \Appext}{\Vext}{\Vext}
=
\opnorm{\Appext}{\Vext}{\Vext}. \qedhere
\end{equation}
\end{proof}

Formula \eqref{Cqopt=} allows for the following geometric interpretation of the quasi-optimality constant.

\begin{remark}[Geometry of quasi-optimality constant]
	\label{R:geometry}
	Buckholtz \cite{Buckholtz:00} shows that the operator norm of a bounded projection $Q$ on a Hilbert space $H$ satisfies
	\begin{equation*}
	\opnorm{Q}{H}{H}
	=
	\frac{1}{\sin\theta}
	=
	\opnorm{\id_H - Q}{H}{H},
	\end{equation*}
	where $\theta$ is the angle between $K=N(Q)$ and $R=R(Q)$, that is, $\theta\in(0,\pi/2]$ and its cosine equals $\sup\{|\langle k,r \rangle_H| \mid k\in K, r\in R, \norm{k}_H=1, \norm{r}_H=1 \}$.  Notice that $N(\Appext) =  R(\id_\Vext-\Appext) = R(\id_V-\App)$, where the last identity follows from \eqref{I-Appext}.  Combining these two facts, we deduce
	\begin{equation}
	\label{|Pext|}
	\Cqopt
	=
	\opnorm{\Appext}{\Vext}{\Vext}
	=
	\frac{1}{\sin\alpha}
	\end{equation}
	where $\alpha$ is the angle between the discrete space $S$ and the range $R(\id_V-\App)$.
\end{remark}

Theorem \ref{T:Cqopt} reveals that the possibly weak full algebraic consistency is still enough consistency to ensure, together with stability, quasi-optimality. However, it does not control the size of the quasi-optimality constant.

\subsection{The quasi-optimality constant and two consistency measures}
\label{S:Cqopt}
Let $\App$ be the approximation operator of a quasi-optimal method. The fact that $\Appext$ is an extension of $\App$ readily yields
\begin{equation*}
\Cqopt
=
\opnorm{\Appext}{\Vext}{\Vext}
\geq
\opnorm{\App}{V}{S}
=
\Cstab,
\end{equation*}
where the last identity is due to isometry \eqref{isometry} of $A$. The possible enlargement of $\Cqopt$ with respect to $\Cstab$ is a new feature triggered by nonconformity. It is the purpose of the section to quantify this phenomenon.

Our key tool will be the following elementary lemma.
\begin{lemma}[Operator norm and restrictions]
\label{L:restrictions}
Let $T\in\mathcal{L}(H)$ be a bounded linear operator on a Hilbert space $H$ with scalar product $\langle\cdot,\cdot\rangle_H$ and induced norm $\norm{\cdot}_H$. If $Y$ is a linear closed subspace of $H$ and $Y^\perp$ is its orthogonal complement, we have
\begin{equation*}
  \max\{ C,\delta \}
  \leq
  \opnorm{T}{H}{H}
  \leq
  \sqrt{C^2+\delta^2}
\end{equation*}
with
\begin{equation*}
 C = \opnorm{T_{|Y}}{Y}{H}
\quad\text{and}\quad
 \delta = \opnorm{T_{|Y^\perp}}{Y^\perp}{H}.
\end{equation*}
\end{lemma}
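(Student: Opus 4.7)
The plan is to treat the two bounds separately, as is customary for sandwich estimates of operator norms. Both are elementary; the only structural input is that $Y$ is closed so that $H=Y\oplus Y^\perp$ as an orthogonal direct sum and Pythagoras applies.

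For the lower bound, the plan is simply to observe that for every nonzero $y\in Y$ one has $\|Ty\|_H/\|y\|_H\le \opnorm{T}{H}{H}$ (and similarly for $z\in Y^\perp$), so taking suprema over each subspace individually yields $C\le \opnorm{T}{H}{H}$ and $\delta\le \opnorm{T}{H}{H}$, and hence their maximum is bounded by $\opnorm{T}{H}{H}$.

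For the upper bound, I would decompose an arbitrary $h\in H$ as $h=y+z$ with $y\in Y$, $z\in Y^\perp$, so that by orthogonality $\|h\|_H^2=\|y\|_H^2+\|z\|_H^2$. Using linearity of $T$ and the triangle inequality, together with the definitions of $C$ and $\delta$,
\begin{equation*}
  \|Th\|_H\le \|Ty\|_H+\|Tz\|_H\le C\|y\|_H+\delta\|z\|_H.
\end{equation*}
The Cauchy--Schwarz inequality applied to the vectors $(C,\delta)$ and $(\|y\|_H,\|z\|_H)$ in $\mathbb{R}^2$ then gives
\begin{equation*}
  C\|y\|_H+\delta\|z\|_H
  \le \sqrt{C^2+\delta^2}\,\sqrt{\|y\|_H^2+\|z\|_H^2}
  = \sqrt{C^2+\delta^2}\,\|h\|_H,
\end{equation*}
from which $\opnorm{T}{H}{H}\le\sqrt{C^2+\delta^2}$ follows by taking the supremum over $h$.

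There is essentially no obstacle here: the argument is a one-line triangle-plus-Cauchy--Schwarz estimate once the orthogonal decomposition is in place. The only subtlety worth flagging is the use of closedness of $Y$ (needed for the orthogonal projection onto $Y$ to exist and for $H=Y\oplus Y^\perp$), which makes the restrictions $T_{|Y}$ and $T_{|Y^\perp}$ well-defined bounded operators with finite norms $C$ and $\delta$.
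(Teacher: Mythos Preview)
Your proof is correct and follows essentially the same approach as the paper: decompose along $Y\oplus Y^\perp$, bound $\|Th\|_H$ by $C\|y\|_H+\delta\|z\|_H$, and use Pythagoras. The only cosmetic difference is that the paper expands $\|Tx\|_H^2$ via the scalar product and then maximizes the one-variable function $h(\alpha)=C\alpha+\delta\sqrt{1-\alpha^2}$ over $[0,1]$, whereas you reach the same bound more directly via Cauchy--Schwarz in $\mathbb{R}^2$.
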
 

\begin{proof}
The lower bound immediately follows from the definition of the operator norm $\opnorm{T}{H}{H} = \sup_{\norm{x}_H=1} \norm{Tx}_H$. To verify the upper bound, let $x \in H$ be arbitrary and denote by $\pi_Y$ the orthogonal projection onto $Y$.  We have
\begin{equation}
\label{|Tx|<=;1}
\begin{aligned}
 \norm{Tx}_H^2
 &=
 \norm{T\pi_Yx}_H^2
  + 2 \big\langle T\pi_Yx, T(x-\pi_Yx) \big\rangle_H
  + \norm{T(x-\pi_Yx)}_H^2
\\
 &\leq
 C^2 \norm{\pi_Yx}_H^2
 + 2 C \delta\norm{\pi_Yx}_H \norm{x-\pi_Yx}_H
 + \delta^2 \norm{x-\pi_Yx}_H^2 
\end{aligned}
\end{equation}
in view of the bilinearity of the scalar product, the Cauchy-Schwarz inequality and the definitions of $C$ and $\delta$. Notice that
\begin{equation*}
  \norm{\pi_Yx}_H^2 + \norm{x-\pi_Yx}_H^2 = \norm{x}_H^2
\end{equation*}
thanks to the orthogonality of $\pi_Y$. Thus, if we write $\alpha = \norm{\pi_Yx}$, \eqref{|Tx|<=;1} becomes
\begin{equation*}
 \norm{Tx}_H^2
 \leq
 h(\alpha)^2
\quad\text{with}\quad
 h(\alpha)
 :=
 C\alpha + \delta \sqrt{1-\alpha^2},
\end{equation*}
which implies
\begin{equation*}
 \opnorm{T}{H}{H}
 \leq
 \max_{\clsint{0}{1}} h.
\end{equation*}
A straight-forward discussion of the function $h$ yields $\max_{\clsint{0}{1}} h = \sqrt{C^2 + \delta^2}$ and the upper bound is established, too.
\end{proof}

\begin{remark}[Sharpness of bounds via restrictions]
\label{R:sharpness-bounds-restriction}
Since
\begin{equation*}
 \max \{C,\delta \}
 \leq
 \sqrt{C^2 + \delta^2}
 \leq
 \sqrt{2}\max\{ C,\delta \},
\end{equation*}
the bounds in Lemma \ref{L:restrictions} miss an equality at most by the factor $\sqrt{2}$.  Let us see with two simple examples that, without additional information on $T$ and $Y$, we cannot improve on this.

First, consider $H = \R^2$, $T_1 = \id_{\R^2}$ and let $Y$ be any 1-dimensional subspace of $\R^2$. Obviously, we then have $\opnorm{ T_1 }{H}{H} = \opnorm{T_1{}_{|Y}}{Y}{H} = \opnorm{T_1{}_{|Y^\perp}}{Y^\perp}{H} = 1 $ and so
the lower bound becomes an equality, while the upper bound is strict.

Second, consider $H = \R^2$ and let $T_2$ be the linear operator which is represented in the canonical basis of $\R^2$ by the Matlab matrix \texttt{1/2*ones(2)}. The operator $T_2$ is the orthogonal projection onto the diagonal $\{ (t,t) \mid t \in \R \}$, whence $\opnorm{ T_2 }{H}{H}=1$.  Finally, let $Y = \{ (0,t) \mid t \in \R \}$ be the ordinate.  Then the operator norms of $T_2$ restricted to $Y$ and $Y^\perp$ correspond to the Euclidean norms of the columns of the aforementioned matrix: $\opnorm{T_2{}_{|Y}}{Y}{H} = \opnorm{T_2{}_{|Y^\perp}}{Y^\perp}{H} = 1/\sqrt{2}$.  Consequently, here the upper bound is an equality, while the lower bound is strict.
\end{remark}

The fact that the extended approximation operator $\Appext$ is given on $S$ by the identity and on $V$ by $\App$ suggests two options for applying Lemma \ref{L:restrictions}: $Y=S$ and $Y=V$. We start with the first option, which leads to a consistency measure in the spirit of the second Strang lemma.

\begin{proposition}[Consistency mixed with stability]
\label{P:consistency-with-stability}
Let $\Ritz_S$ be the $\aext$-orthogonal projection onto $S$ and $\delta_V \geq 0$ be the smallest constant such that
\begin{equation*}
 \forall v \in V
\quad
 \norm{\Ritz_S v - P v}
 \leq
 \delta_V
 \norm{v - \Ritz_S v}.
\end{equation*}
Then the quasi-optimality constant is given by
$
 \Cqopt = \sqrt{1+\delta_V^2}.
$
\end{proposition}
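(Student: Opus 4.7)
The plan is to combine the identity $\Cqopt = \opnorm{\Appext}{\Vext}{\Vext}$ from Theorem \ref{T:qopt} with the following structural observation: since $\Appext$ has range in $S$ and restricts to the identity on $S$, it is natural to analyze its operator norm through the $\aext$-orthogonal decomposition $\Vext = S \oplus S^\perp$, where $S^\perp$ denotes the $\aext$-orthogonal complement of $S$ in $\Vext$. The upper bound will then follow from the triangle inequality and Cauchy--Schwarz, while the lower bound will be attained by aligning the $S$- and $S^\perp$-components of the argument.

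The first step is to show that $\opnorm{\Appext|_{S^\perp}}{S^\perp}{\Vext} = \delta_V$. Every $w \in S^\perp$ admits a representation $w = v - \Ritz_S v$ with $v \in V$: writing $w = v + s$ according to $\Vext = V + S$, the condition $\Ritz_S w = 0$ forces $s = -\Ritz_S v$. The defining formula $\Appext(v+s) = \App v + s$ from \eqref{Pext} then yields $\Appext w = \App v - \Ritz_S v$, so the definition of $\delta_V$ gives $\norm{\Appext w} \le \delta_V \norm{w}$. Conversely, every vector of the form $v - \Ritz_S v$ lies in $S^\perp$, so the bound is sharp.

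For the upper bound on $\opnorm{\Appext}{\Vext}{\Vext}$, decompose $\vext \in \Vext$ as $\vext = s + w$ with $s = \Ritz_S \vext$ and $w = \vext - \Ritz_S \vext \in S^\perp$. Pythagoras gives $\norm{\vext}^2 = \norm{s}^2 + \norm{w}^2$, and linearity together with $\Appext|_S = \id_S$ yields $\Appext \vext = s + \Appext w$, with both summands lying in $S$. Triangle inequality and Cauchy--Schwarz then give
\begin{equation*}
 \norm{\Appext \vext}
 \le
 \norm{s} + \norm{\Appext w}
 \le
 \norm{s} + \delta_V \norm{w}
 \le
 \sqrt{1 + \delta_V^2}\, \norm{\vext}.
\end{equation*}

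The matching lower bound is the main technical point and relies on the crucial fact that both $s$ and $\Appext w$ lie in $S$, so that the triangle inequality used above is saturable by choosing $s$ parallel to $\Appext w$. If $\delta_V = 0$, the claim reduces to $\Cqopt = 1$ and follows from the upper bound together with $\Cqopt \ge 1$. If $\delta_V > 0$, pick $v_n \in V$ with $\norm{\App v_n - \Ritz_S v_n}/\norm{v_n - \Ritz_S v_n} \to \delta_V$, set $w_n = v_n - \Ritz_S v_n$, and let $s_n = \lambda_n \Appext w_n$ with $\lambda_n = \norm{w_n}^2/\norm{\Appext w_n}^2$; a direct calculation then shows
\begin{equation*}
 \frac{\norm{\Appext(s_n + w_n)}^2}{\norm{s_n + w_n}^2}
 =
 \frac{(1+\lambda_n)^2 \norm{\Appext w_n}^2}{\lambda_n^2 \norm{\Appext w_n}^2 + \norm{w_n}^2}
 =
 1 + \frac{\norm{\Appext w_n}^2}{\norm{w_n}^2}
 \longrightarrow
 1 + \delta_V^2,
\end{equation*}
so $\opnorm{\Appext}{\Vext}{\Vext} \ge \sqrt{1+\delta_V^2}$, and the proof is complete.
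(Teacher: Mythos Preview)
Your proof is correct and shares the paper's overall strategy: invoke $\Cqopt = \opnorm{\Appext}{\Vext}{\Vext}$ from Theorem~\ref{T:qopt}, use the $\aext$-orthogonal decomposition $\Vext = S \oplus S^\perp$, and identify $\delta_V$ with $\opnorm{\Appext|_{S^\perp}}{S^\perp}{\Vext}$. Your upper bound is the special case of the paper's Lemma~\ref{L:restrictions} with $Y=S$ and $C=1$, reproved inline.

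The lower bound is where you diverge. The paper does not construct near-maximizers; instead it exploits Pythagoras directly on the error: since $v-\Ritz_S v \in S^\perp$ and $\Ritz_S v - \App v \in S$, one has $\norm{v-\App v}^2 = \norm{v-\Ritz_S v}^2 + \norm{\Ritz_S v - \App v}^2$, and combining this with $\norm{v-\App v} \le \Cqopt \norm{v-\Ritz_S v}$ (Theorem~\ref{T:qopt}) immediately gives $\delta_V^2 \le \Cqopt^2 - 1$. This is shorter and avoids the case split on $\delta_V=0$ and the approximating sequence. Your explicit construction, on the other hand, has the merit of being self-contained: it shows directly that $\opnorm{\Appext}{\Vext}{\Vext} \ge \sqrt{1+\delta_V^2}$ without routing back through the quasi-optimality inequality, and makes transparent why the triangle inequality in the upper bound is saturable---precisely because $\Appext w \in S$ allows aligning the $S$-component with it.
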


\begin{proof}
Owing to Theorem \ref{T:qopt}, we may show the claimed identity by verifying $\opnorm{\Appext}{\Vext}{\Vext} = \sqrt{1+\delta_V^2}$.
Applying Lemma \ref{L:restrictions} with $H=\Vext$, $T=\Appext$ and $Y=S$, we obtain
\begin{equation*}
 \opnorm{\Appext}{\Vext}{\Vext} \leq \sqrt{1+\delta^2}
\end{equation*}
with $\delta = \opnorm{\Appext}{S^\perp}{\Vext}$. Given $s^\perp \in S^\perp$, we write $s^\perp = v + s$ with $v \in V$ and $s \in S$ and observe that
\begin{equation*}
 s^\perp = s^\perp - \Ritz_S s^\perp = v - \Ritz_S v
\quad\text{and}\quad
 \Appext s^\perp = \App v - \Ritz_S v.
\end{equation*}
Hence $\delta = \delta_V$ and
\begin{equation}
\label{Pext>delta_V}
 \opnorm{\Appext}{\Vext}{\Vext} \leq \sqrt{1+\delta_V^2}.
\end{equation}  To show that this is actually an equality, note that, for any $v \in V$,
\begin{equation}
\label{orth}
 \norm{v - \Ritz_S v}^2 + \norm{\Ritz_S v - \App v}^2
 =
  \norm{v - \App v}^2
 \leq
 \opnorm{\Appext}{\Vext}{\Vext}^2 \norm{v - \Ritz_S v}^2,
\end{equation}
where we first combined the orthogonality of $\Ritz_S$ with $\Ritz_S v - \App v \in S$ and then used Theorem \ref{T:qopt}.  Rearranging terms, we see that $\delta_V^2 \leq \opnorm{\Appext}{\Vext}{\Vext}^2 - 1$, yielding the desired inequality $\sqrt{1+\delta_V^2} \leq \opnorm{\Appext}{\Vext}{\Vext}$.
\end{proof}

The following two remarks discuss the nature of $\delta_V$.

\begin{remark}[$\delta_V$ and (non)conforming consistency]
\label{R:delta_V-consistency}
In the conforming case $S \subseteq V$, without assuming the quasi-optimality of the underlying method, the existence of $\delta_V$ is equivalent to full algebraic consistency.  Therefore, $\delta_V$ can be seen as a quantitative generalization of full algebraic consistency to the nonconforming case. It measures, in relative manner, how much the method deviates from the best approximation $\Ritz_S$. Thus, Proposition \ref{P:consistency-with-stability} is a specification of the second Strang lemma, where the exploitation of the nonconforming direction is compared with the best approximation error. Let us illustrate this in the purely nonconforming case $V \cap S = \{0\}$. The best case corresponds to $\App = \Ritz_S$, yielding $\delta_V = 0$ and $\Cqopt = 1$. Instead, $\App = 0$ is  quasi-optimal with $\delta_V = ( \inf_{\norm{s}=1} \inf_{\Ritz_S v = s} \norm{s-v}  )^{-1}$, which becomes infinity as the distance between $S$ and $V$ tends to $0$.
\end{remark}

\begin{remark}[$\delta_V$ and stability]
\label{R:delta_V-stability}
The size of $\delta_V$ is in general affected by stability.  Indeed, using \eqref{Pext>delta_V}, we readily derive
\begin{equation*}
 \delta_V
 \geq
 \sqrt{\opnorm{\Appext}{\Vext}{\Vext}^2 - 1}
 \geq
 \sqrt{\opnorm{\App}{V}{S}^2 - 1}
 =
 \sqrt{ \Cstab^2 - 1}
\end{equation*}
and notice in particular that, if a sequence of methods becomes unstable, the corresponding $\delta_V$'s become unbounded.
\end{remark}

We now turn to the second option of applying Lemma \ref{L:restrictions}. Interestingly, it provides an alternative consistency measure which is essentially independent of stability.

\begin{proposition}[Consistency without stability]
\label{P:consistency-without-stability}
Let $\Ritz_V$ be the $\aext$-orthogonal projection onto $V$ and $\delta_S \geq 0$ be the smallest constant such that
\begin{equation*}
 \forall s \in S
\quad
 \norm{s - P\Ritz_V s}
 \leq
 \delta_S
 \norm{s - \Ritz_V s}.
\end{equation*}
Then the quasi-optimality constant satisfies
\begin{equation}
\label{deltaS-bounds}
 \max \{ \Cstab, \delta_S \}
 \leq
 \Cqopt
 \leq
 \sqrt{ \Cstab^2 + \delta_S^2}.
\end{equation}
\end{proposition}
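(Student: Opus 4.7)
The plan is to mimic the argument of Proposition \ref{P:consistency-with-stability}, now applying Lemma \ref{L:restrictions} to $H = \Vext$, $T = \Appext$ and the other natural closed subspace, namely $Y = V$. Combined with the identity $\Cqopt = \opnorm{\Appext}{\Vext}{\Vext}$ from Theorem \ref{T:qopt}, this immediately produces
\begin{equation*}
\max\{C,\delta\} \leq \Cqopt \leq \sqrt{C^2+\delta^2}
\end{equation*}
with $C = \opnorm{\Appext_{|V}}{V}{\Vext}$ and $\delta = \opnorm{\Appext_{|V^\perp}}{V^\perp}{\Vext}$. The task therefore reduces to recognising $C = \Cstab$ and $\delta = \delta_S$.

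The first identification is essentially immediate: since $\Appext_{|V} = \App$ takes values in the closed subspace $S$ and the norm induced by $\aext$ on $S$ coincides with the one on $\Vext$, one has $C = \opnorm{\App}{V}{S}$, which by the isometry \eqref{isometry} and $\App = \app A$ equals $\opnorm{\app}{V'}{S} = \Cstab$. For the second, I first need an explicit description of $V^\perp$ inside $\Vext$, and I would show that $V^\perp = \{\, s - \Ritz_V s \mid s \in S \,\}$. The inclusion $\supseteq$ is clear from the definition of $\Ritz_V$; conversely, any $x \in V^\perp$ can be written $x = v+s$ with $v\in V$, $s\in S$, and testing the orthogonality relation $\aext(v+s,w)=0$ against $w\in V$ forces $v = -\Ritz_V s$, whence $x = s-\Ritz_V s$. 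Using $\Appext_{|S} = \id_S$ and $\Appext_{|V} = \App$, this yields $\Appext(s - \Ritz_V s) = s - \App\Ritz_V s$, so $\delta$ is exactly the smallest constant for which $\norm{s - \App\Ritz_V s} \leq \delta \norm{s-\Ritz_V s}$ holds for every $s \in S$, that is, $\delta = \delta_S$. Note that for $s \in S \cap V$ both sides vanish, the right by construction and the left by full algebraic consistency, cf.\ \eqref{P|V cap S}.

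The main obstacle, I expect, is precisely this parametrisation of $V^\perp$ by $S$ modulo $S\cap V$ and the verification that it matches the quotient-like form of $\delta_S$; once that identification is carried out, Lemma \ref{L:restrictions} with $C = \Cstab$ and $\delta = \delta_S$ immediately delivers \eqref{deltaS-bounds}.
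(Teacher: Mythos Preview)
Your proposal is correct and follows essentially the same route as the paper: apply Lemma~\ref{L:restrictions} with $H=\Vext$, $T=\Appext$, $Y=V$, then identify $C=\opnorm{\App}{V}{S}=\Cstab$ and $\delta=\delta_S$ via the parametrisation $V^\perp=\{s-\Ritz_V s\mid s\in S\}$ together with $\Appext(s-\Ritz_V s)=s-\App\Ritz_V s$. You supply somewhat more detail on the parametrisation of $V^\perp$ than the paper does, but the argument is the same.
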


\begin{proof}
Thanks to Theorem \ref{T:qopt}, it suffices to apply Lemma \ref{L:restrictions} with $H = \Vext$, $T = \Appext$ and $Y = V$ and to observe the following identities: given $v^\perp \in V^\perp$, $v \in V$, $s \in S$ such that $v^\perp = v + s$, we have
\begin{equation*}
v^\perp = v^\perp - \Ritz_V v^\perp = s - \Ritz_V s
\quad\text{and}\quad
\Appext v^\perp = s - \App\Ritz_V s.
\qedhere
\end{equation*}
\end{proof}

We now discuss also the nature of $\delta_S$, elaborating its differences from the first consistency measure $\delta_V$.

\begin{remark}[$\delta_S$ and (non)conforming consistency]
\label{R:delta_S-consistency}
As for $\delta_V$, the existence of $\delta_S$ is equivalent to full algebraic consistency in the conforming case $S \subseteq V$. Correspondingly, it can be seen as an alternative, quantitative generalization of full algebraic consistency to the nonconforming case. The alternative $\delta_S$ is however not comparing with the best approximation $\Ritz_S$. In particular, we have that $\delta_S=0$ implies
\begin{equation*}
 \Cqopt
 =
 \opnorm{\Appext}{\Vext}{\Vext}
 =
 \opnorm{\App}{V}{S}
 =
 \Cstab,
\end{equation*}
which is an interesting property not involving the best approximation $\Ritz_S$.
Let us illustrate how the difference is expressed in measuring the exploitation of the nonconforming directions by considering, as in Remark \ref{R:delta_V-consistency}, the purely nonconforming case $V \cap S = \{ 0\}$. Here the best choice $\App = \Ritz_S$ leads to $\delta_S<1$, while $\App = 0$ gives $\delta_S = (\inf_{\norm{s}=1} \norm{s-\Ritz_V s})^{-1}$. In the latter case, $\delta_S$ like $\delta_V$ becomes infinity as the distance between $S$ and $V$ tends to $0$, although in a (possibly) other manner. 
\end{remark}

\begin{remark}[$\delta_S$ and stability]
\label{R:delta_S-stability}
We illustrate that the quantities $\delta_S$ and $\Cstab$ are essentially independent. In order to make sure that this is not affected by a possible lack of approximability, we consider the following setting with a sequence of discrete spaces:
\begin{gather*}
 \Vext = \ell_2(\R)
 \text{ with canonical basis }(e_i)_{i=0}^\infty,
\quad
 \aext(v,w) = \sum_{i=0}^\infty v_i w_i,
\intertext{where we identify $v = \sum_{i=0}^\infty v_i e_i$ with $(v_i)_{i=0}^\infty$, etc., and}
 V = \overline{\text{span}\,\{ e_i \mid i \geq 1 \}},
\quad
 S_n = \text{span}\,\{ e_i \mid i=1,\dots,n-1 \} + \text{span}\,\{ \alpha_n e_0 + e_n \},
\end{gather*}
where $n \geq 1$ and $(\alpha_n)_n \subseteq \R_+$ is some sequence of positive reals. Here only $\alpha_n e_0 + e_n$ is nonconforming and thus not involved in full algebraic consistency. If $\lim_{n\to\infty} \alpha_n = 0$, this direction becomes a new conforming direction, while for $\lim_{n\to\infty} \alpha_n = \infty$, it gets orthogonal to $V$. In any case, we have
\begin{equation*}
 S_n \cap V = \text{span}\,\{e_i \mid i = 1,\dots, n-1 \}
\quad\text{and}\quad
 V = \overline{ \bigcup_{n \geq 1} S_n }.
\end{equation*}
Moreover, straight-forward computations reveal that the orthogonal projections onto $S_n$ and $V$ are given by
\begin{gather*}
 \Ritz_{S_n} v
 =
 \sum_{i=1}^{n-1} v_i e_i + \frac{v_n}{1+\alpha_n^2} (\alpha_n e_0 + e_n)
\text{ for } v \in V,
\quad
 \Ritz_V s = \sum_{i=1}^{n} s_i e_i
\text{ for } s \in S.
\end{gather*}

One possibility to deal with the nonconforming direction $\alpha_n e_0 + e_n$ is to ignore it, e.g., by choosing methods with the approximation operators
\begin{equation*}
 \App_{1,n} v = \sum_{i=1}^{n-1} v_i e_i
\quad\text{for}\quad
 v \in V.
\end{equation*}
Each approximation operator $\App_{1,n}$ is fully algebraically consistent and fully stable with $\opnorm{\App_{1,n}}{V}{S} = 1$.  Furthermore, $\Ritz_V(\alpha_n e_0 + e_n) = e_n$ and $\App_{1,n} e_n = 0$ yield
\begin{equation*}
 \delta_{S_n} 
 \geq
 \frac{ \norm{\overline{s}_n - \App_{1,n} \Ritz_V \overline{s}_n} }%
  { \norm{\overline{s}_n - \Ritz_V \overline{s}_n} }
 =
 \frac{\norm{\overline{s}_n}}{\alpha_n \norm{e_0}}
 =
 \frac{\sqrt{1+\alpha_n^2}}{\alpha_n}
 \geq
 \frac{1}{\alpha_n}.
\end{equation*}
with $\overline{s}_n := \alpha_n e_0 + e_n$.  Consequently, letting $\alpha_n\to 0$ shows that $\delta_S$ can become arbitrarily large, while the stability constant attains its minimal value for the case $S_n\cap V \neq \{0\}$.

Given a sequence $(\beta_n)_n \subseteq \R_+$ of positive reals, the approximation operators
\begin{equation*}
 \App_{2,n} v
 :=
 \sum_{i=1}^{n-1} v_i e_i
  + \left( v_n + \frac{\beta_n}{1+\alpha_n^2} v_{n+1}\right) (\alpha_n e_0 + e_n)
\quad\text{for}\quad v \in V 
\end{equation*}
exploit the nonconforming direction $\alpha_n e_0 + e_n$. Again, each $\App_{2,n}$ is fully algebraically consistent and fully stable.  Here, since $\App_{2,n} \Ritz_V s = s$ for all $s \in S$, we have that $\delta_S = 0$, while
\begin{equation*}
 \opnorm{\App_{2,n}}{V}{S}
 \geq
 \frac{ \norm{ \App_{2,n} e_{n+1} } }{ \norm{e_{n+1}} }
 \geq
 \frac{ \beta_n }{ \sqrt{ 1 + \alpha_n^2 } }.
\end{equation*}
Thus, $ \beta_n / \sqrt{ 1 + \alpha_n^2 } \to \infty $ shows that the stability constant can become arbitrarily large, while $\delta_S$ attains its minimal value $0$.
\end{remark}

\begin{remark}[Asymptotic consistency]
\label{R:asym-consistency}
The preceding remark exemplifies that the exploitation of the nonconforming direction measured by $\delta_V$ and $\delta_S$ is relevant also `in the limit' for sequences of discrete spaces and can be controlled via the uniform boundedness of the consistency measures.
\end{remark}

We conclude this section with slight generalizations of Propositions~\ref{P:consistency-with-stability} and \ref{P:consistency-without-stability}. 

\begin{remark}[Consistency measures and non-quasi-optimality]
\label{R:consistency-infty}
If the method underlying $\App$ is not quasi-optimal, we may set $\Cqopt=\infty$. Similarly, if $\delta_V$ (or $\delta_S$) does not exist, we set $\delta_V = \infty$ (or $\delta_S=\infty$). Then
\begin{equation*}
 \delta_V = \infty \iff \Cqopt = \infty
\qquad\text{and}\qquad
 \delta_S=\infty \implies \Cqopt = \infty
\end{equation*}
and, using standard conventions for $\infty$, the formulas in Propositions~\ref{P:consistency-with-stability} and \ref{P:consistency-without-stability} hold irrespective of quasi-optimality.	
\end{remark}

\section{The structure of quasi-optimal methods}
\label{S:Structure}
%
%
As explained in the introduction, there is a great interest to devise quasi-optimal nonconforming methods. To this end, it is useful to determine the structure of nonconforming methods that are quasi-optimal. This is the task of this section, which, in light of Theorem~\ref{T:qopt}, reduces to determine the structure of full stability and full algebraic consistency.

\subsection{Extended approximation operator and extended bilinear form}
\label{S:extensions}
Our analysis of quasi-optimality in \S\ref{S:qopt} has been centered around
the extended approximation operator $\Appext$. In this subsection we relate this key tool to the extended bilinear form $\bext$ from Lemma \ref{L:ConsistencyWithExtension} and, thus, more closely to the data $(a,S,b,L)$ defining problem and method.

\begin{lemma}[Extensions of approximation operator and bilinear forms]
\label{L:Pext_and_bext}
The approximation operator $P$ extends to a bounded linear projection $\Appext$ from $\Vext$ onto $S$ if and only if there exists a bounded common extension $\bext$ of $b$ and $\langle LA\cdot,\cdot\rangle$ to $\Vext\times S$.

If one of the two extensions exists, we have the following generalization of the Galerkin orthogonality:
\begin{equation*}
%
\forall \vext \in \Vext, \sigma \in S
\quad
\bext(\vext - \Appext\vext,\sigma) = 0.
\end{equation*}

\end{lemma}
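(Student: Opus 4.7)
The plan is to show that each of the two hypotheses in the equivalence amounts to full algebraic consistency together with boundedness of $P$, and then to invoke Lemma~\ref{L:ConsistencyWithExtension} and Lemma~\ref{L:Pext} on the appropriate sides.

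First I would extract full algebraic consistency from each hypothesis, so that the constructions \eqref{bext} of $\bext$ and \eqref{Pext} of $\Appext$ are both available. If $\Appext$ is a bounded projection from $\Vext$ onto $S$ extending $P$, then $\Appext_{|S}=\id_S$ combined with $\Appext_{|V}=P$ forces $Pu=u$ for every $u\in V\cap S$, which is precisely \eqref{P|V cap S}. If instead $\bext$ is a bounded common extension of $b$ and $\langle LA\cdot,\cdot\rangle$, then the two defining formulas must agree on $(V\cap S)\times S$, giving \eqref{Consistency}. Under either hypothesis Lemma~\ref{L:ConsistencyWithExtension} then supplies the (unique) $\bext$ via \eqref{bext} and the construction \eqref{Pext} yields a well-defined linear projection of $\Vext$ onto $S$.

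The crux of the argument is the identity
\begin{equation*}
  \bext(\vext,\sigma) = b(\Appext\vext,\sigma)
  \qquad\text{for all } \vext\in\Vext,\ \sigma\in S,
\end{equation*}
which I would verify directly: writing $\vext=v+s$ with $v\in V$ and $s\in S$, combining \eqref{bext} and \eqref{Pext} and replacing $\langle LAv,\sigma\rangle$ by $b(Pv,\sigma)$ via the discrete problem \eqref{disc-prob} reduces both sides to $b(Pv+s,\sigma)$. Given this identity, boundedness of $\Appext$ immediately transfers to $\bext$ because $b$ is automatically continuous on the finite-dimensional space $S\times S$. For the converse, the nondegeneracy of $b$ on the finite-dimensional space $S$ furnishes an inf--sup constant $c>0$ with $\norm{s}\leq c\sup_{\sigma\in S,\,\norm{\sigma}=1}|b(s,\sigma)|$; applied to $s=\Appext\vext$ and combined with the key identity it yields $\norm{\Appext\vext}\leq c\,\norm{\bext}\,\norm{\vext}$, so in particular $P=\Appext_{|V}$ is bounded, and Lemma~\ref{L:Pext} produces the desired bounded projection $\Appext$.

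The generalized Galerkin orthogonality then follows at once from the key identity and $\Appext^2=\Appext$:
\begin{equation*}
  \bext(\vext-\Appext\vext,\sigma) = b(\Appext\vext,\sigma) - b(\Appext^2\vext,\sigma) = 0.
\end{equation*}
The main difficulty I anticipate is structural rather than computational: namely, checking that the bilinear form $b(\Appext\cdot,\cdot)$ really coincides with the $\bext$ produced by Lemma~\ref{L:ConsistencyWithExtension} (which I would handle by invoking the uniqueness clause there) and that the finite-dimensional inf--sup constant genuinely transfers the bound on $\bext$ back to $\Appext$ without a hidden loss.
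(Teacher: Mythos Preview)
Your argument is correct, but it takes a more circuitous route than the paper's. The paper does not pass through full algebraic consistency or invoke Lemma~\ref{L:ConsistencyWithExtension} and Lemma~\ref{L:Pext} at all; instead it constructs each extension directly from the other via the single formula $\bext(\vext,\sigma):=b(\Appext\vext,\sigma)$ in one direction, and in the other direction defines $\Appext\vext$ as the unique element of $S$ solving $b(\Appext\vext,\sigma)=\bext(\vext,\sigma)$ for all $\sigma\in S$ (using only nondegeneracy of $b$). The extension and boundedness properties are then checked in one or two lines each, and the generalized Galerkin orthogonality is simply a restatement of either defining relation.

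Your approach buys modularity: by isolating full algebraic consistency first, you make explicit that both extensions exist as algebraic objects before boundedness is addressed, and you can then reuse the uniqueness clauses of the earlier lemmas to identify any given extension with the canonical one. The paper's approach buys economy: it avoids the detour through consistency and the earlier lemmas entirely, and makes the mutual inverse relationship between $\Appext$ and $\bext$ transparent from the outset. Note also that your final appeal to Lemma~\ref{L:Pext} after already bounding $\Appext$ via the inf--sup argument is redundant, since the $\Appext$ you constructed via \eqref{Pext} is already the bounded projection you need.
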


\begin{proof}
Assume $\Appext$ is a bounded linear projection from $\Vext$ onto $S$ extending $P$.  Then
\begin{equation}
\label{def_bext}
	\bext(\vext,\sigma)
	:=
	b(\Appext\vext,\sigma)
\end{equation}
defines a bounded bilinear form on $\Vext\times S$.  Since $\Appext$ is a projection onto $S$, $\bext$ is an extension of $b$.  Furthermore, if $v\in V$ and $\sigma \in S$, then $\Appext_{|V} = \App$ yields $\bext(v,\sigma) = b(\App v, \sigma) = \langle LAv,\sigma \rangle$. Consequently, $\bext$ is also an extension of $\langle LA\cdot,\cdot \rangle$.
	
Conversely, assume that $\bext$ is a bounded common extension of $b$ and $\langle LA\cdot,\cdot \rangle$ on $\Vext \times S$. Given $\vext\in\Vext$,  define $\Appext\vext$ by
\begin{equation}
\label{def_Appext}
 \Appext\vext \in S
\quad\text{such that}\quad
	\forall \sigma \in S
	\;\;
	b(\Appext\vext,\sigma) = \bext(\vext,\sigma).
\end{equation}
Since $b$ is a nondegenerate bilinear form on $S\times S$, the element $\Appext\vext$ exists, is unique and depends on $\vext$ linearly.  The uniqueness and $\bext=b$ on $S \times S$ give $\Appext_{|S} = \id_S$.  Using $\bext = \langle LA\cdot,\cdot \rangle = b(\App\cdot,\cdot)$ on $V\times S$, we obtain $\Appext_{|V} = \App$.  Finally, the boundedness of $\bext$ entails the boundedness of $\Appext$ and the claimed equivalence is
verified.
 
It remains to verify the generalized Galerkin orthogonality. If one of the two extensions exists, then the other one is given either by \eqref{def_bext} or by \eqref{def_Appext}, which both just restate the claimed generalization.
\end{proof}

The close relationship between the two extensions $\Appext$ and $\bext$ suggests that the operator norm $\opnorm{\Appext}{\Vext}{\Vext}$ can be reformulated in terms of $\bext$. To this end, the following lemma will be very useful, which in turn exploits the following fact from linear functional analysis; see, e.g., Brezis \cite{Brezis:11}.
If $X$ and $Y$ are normed linear spaces, $T:X \to Y$ is a linear operator and $T^\star$ stands for its adjoint, then
\begin{equation}
\label{adjoint-for-bdd-op}
 T \text{ is bounded}
 \implies
 D(T^\star) = Y'
 \text{ with }
 \opnorm{T^\star}{Y'}{X'} = \opnorm{T}{X}{Y}.
\end{equation}

\begin{lemma}[$b$-duality for energy norm on $S$]
\label{L:b-duality}
The nondegenerate bilinear form $b$ induces a norm on $S$ by
\begin{equation*}
 \norm{\sigma}_{b}
 :=
 \norm{b(\cdot,\sigma)}_{S'}
 =
 \sup_{s \in S, \norm{s} = 1} b(s,\sigma),
\quad
 \sigma \in S, 
\end{equation*}
satisfying
\begin{equation*}
 \norm{s} = \sup_{\sigma\in S} \frac{b(s,\sigma)}{\norm{\sigma}_b}.
\end{equation*}
\end{lemma}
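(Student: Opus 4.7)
The proof splits naturally into two tasks: verifying that $\norm{\cdot}_b$ is indeed a norm on $S$, and then establishing the duality identity. The common observation driving both is that $\norm{\sigma}_b = \norm{B^\star\sigma}_{S'}$, where $B^\star : S \to S'$ is the transposed map $\sigma \mapsto b(\cdot,\sigma)$, and that in finite dimension $B^\star$ inherits invertibility from $B$.

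For the first task, homogeneity and the triangle inequality for $\norm{\cdot}_b$ are inherited from the dual norm $\norm{\cdot}_{S'}$ through the identity $\norm{\sigma}_b = \norm{B^\star\sigma}_{S'}$, so the only point requiring real work is positive definiteness. Recall from \eqref{M=} that $B : S \to S'$, $s \mapsto b(s,\cdot)$, is invertible; equivalently, the nondegeneracy of $b$ together with $\dim S = \dim S' < \infty$ yields bijectivity. Representing $b$ by a matrix in any basis of $S$, the invertibility of $B$ amounts to the invertibility of that matrix, hence of its transpose, which in turn means that $B^\star$ is invertible as well. Therefore $\norm{\sigma}_b = 0$ forces $B^\star\sigma = 0$ and hence $\sigma = 0$.

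For the duality identity, I would substitute the explicit form of $\norm{\sigma}_b$ on the right-hand side and perform a change of variable. For every $s \in S$ and $\sigma \in S \setminus \{0\}$,
\[
 \frac{b(s,\sigma)}{\norm{\sigma}_b}
 =
 \frac{\langle B^\star\sigma,s\rangle}{\norm{B^\star\sigma}_{S'}},
\]
and letting $\ell := B^\star\sigma$ run through $S' \setminus \{0\}$, again thanks to the bijectivity of $B^\star$, rewrites the supremum over $\sigma$ as
\[
 \sup_{\sigma \in S \setminus \{0\}} \frac{b(s,\sigma)}{\norm{\sigma}_b}
 =
 \sup_{\ell \in S' \setminus \{0\}} \frac{\langle \ell,s\rangle}{\norm{\ell}_{S'}}.
\]
Since $(S,\norm{\cdot})$ is a finite-dimensional Hilbert space with the norm inherited from $\aext_{|S \times S}$, the Riesz representation theorem (equivalently, the canonical isometric embedding $S \hookrightarrow S''$) identifies the last supremum with $\norm{s}$, which concludes the proof.

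There is no genuine technical obstacle: the crux is simply to notice that nondegeneracy in one slot of $b$ together with the finite dimension of $S$ yields bijectivity of $B^\star$, after which the statement reduces to the standard Hilbert-space identification of $S$ with its bidual.
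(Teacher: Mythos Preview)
Your proof is correct and in fact more direct than the paper's. The paper argues via a pair of sup--sup and inf--sup identities: it first records that $\sup_{s,\sigma} b(s,\sigma)/(\norm{s}\norm{\sigma}_b) = 1$ by the very definition of $\norm{\cdot}_b$, then that $\inf_s \sup_\sigma b(s,\sigma)/(\norm{s}\norm{\sigma}_b) = \inf_\sigma \sup_s b(s,\sigma)/(\norm{s}\norm{\sigma}_b) = 1$, deducing the first equality from the adjoint-operator-norm identity \eqref{adjoint-for-bdd-op} applied to $B^{-1}$; combining the two forces $\sup_\sigma b(s,\sigma)/(\norm{s}\norm{\sigma}_b)=1$ for every $s$. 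Your change of variable $\ell = B^\star\sigma$ sidesteps the inf--sup symmetry entirely and lands directly on the bidual identification $\norm{s} = \sup_{\ell \in S'} \langle \ell, s\rangle/\norm{\ell}_{S'}$, which is cleaner and uses only the bijectivity of $B^\star$. The paper's route, on the other hand, stays in the inf--sup language that recurs throughout the article (cf.\ Remark~\ref{R:trivial-smoothing-cG} and the proof of Theorem~\ref{T:fstab-smoothing}), so it meshes more naturally with the surrounding narrative. Your explicit remark that nondegeneracy in the first slot together with finite dimension yields injectivity of $B^\star$ is a point the paper's proof leaves implicit when it simply asserts definiteness ``thanks to the nondegeneracy of $b$.''
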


\begin{proof}
Obviously, $\norm{\cdot}_{b}$ is a seminorm and definite thanks to the nondegeneracy of $b$. To verify the claimed identity, we observe
\begin{equation}
\label{b;bddness}
 \sup_{s\in S}\sup_{\sigma\in S}
  \frac{b(s,\sigma)}{\norm{s}\norm{\sigma}_{b}}
 =
 \sup_{\sigma\in S} \sup_{s\in S}
	\frac{b(s,\sigma)}{\norm{s}\norm{\sigma}_{b}}
 =
 1
\end{equation}
and
\begin{equation}
\label{b;infsup}
 \infimum_{s \in S} \sup_{\sigma \in S}
	\frac{b(s,\sigma)}{\norm{s}\norm{\sigma}_{b}}
 =
 \infimum_{\sigma \in S} \sup_{s \in S}	
	\frac{b(s,\sigma)}{\norm{s}\norm{\sigma}_{b}} =  1,
\end{equation}
where the `=1's follow from the definition of $\norm{\cdot}_b$ and the first equality in \eqref{b;infsup} follows from \eqref{adjoint-for-bdd-op} applied to the inverse of $B$, the linear operator representing $b$. Combining \eqref{b;bddness} and \eqref{b;infsup}, we see that
\begin{equation*}
 \sup_{\sigma \in S}
  \frac{b(s,\sigma)}{\norm{s}\norm{\sigma}_{b}}
 =
 1 
\end{equation*}
for all $s\in S$ and the claimed identity is verified.
\end{proof}

\begin{lemma}[Norms of extensions]
\label{L:Pext=bext}
If one of the extensions in Lemma \ref{L:Pext_and_bext} exists, we have
\begin{equation*}
\label{||Pext||=bext}
 \opnorm{\Appext}{\Vext}{\Vext}
 =
 \sup_{\sigma\in S}
	\frac{\|\bext(\cdot,\sigma)\|_{\Vext'}}{\norm{b(\cdot,\sigma)}_{S'}}
\end{equation*}
with the `extended' dual norm
$
\norm{\ell}_{\Vext'}
:=
\sup_{\vext\in\Vext, \norm{\vext} = 1} \langle \ell,\vext \rangle.
$
\end{lemma}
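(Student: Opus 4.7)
The plan is to combine Lemmas \ref{L:Pext_and_bext} and \ref{L:b-duality} and then interchange two suprema. Concretely, the proof rests on two identifications: first, the relation $\bext(\vext,\sigma)=b(\Appext\vext,\sigma)$ from the proof of Lemma \ref{L:Pext_and_bext}, which turns every $\bext$-evaluation into a $b$-evaluation of $\Appext\vext\in S$; second, the $b$-duality formula from Lemma \ref{L:b-duality} which recovers the energy norm on $S$ via pairings against $b(\cdot,\sigma)$.

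I would first fix $\vext\in\Vext$ and apply Lemma \ref{L:b-duality} to $s=\Appext\vext\in S$, obtaining
\begin{equation*}
 \norm{\Appext\vext}
 =
 \sup_{\sigma\in S}\frac{b(\Appext\vext,\sigma)}{\norm{\sigma}_b}
 =
 \sup_{\sigma\in S}\frac{\bext(\vext,\sigma)}{\norm{b(\cdot,\sigma)}_{S'}},
\end{equation*}
where the second equality uses $\bext(\vext,\cdot)=b(\Appext\vext,\cdot)$ on $S$ together with the definition $\norm{\sigma}_b=\norm{b(\cdot,\sigma)}_{S'}$.

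Next, I would take the supremum over $\vext\in\Vext$ with $\norm{\vext}=1$ on both sides and swap the two suprema, which is permitted because the quantity involved is nonnegative and the supremum over $\sigma$ is pointwise in $\vext$. Swapping and recognising the inner supremum as the extended dual norm yields
\begin{equation*}
 \opnorm{\Appext}{\Vext}{\Vext}
 =
 \sup_{\vext\in\Vext,\norm{\vext}=1}\sup_{\sigma\in S}
	\frac{\bext(\vext,\sigma)}{\norm{b(\cdot,\sigma)}_{S'}}
 =
 \sup_{\sigma\in S}
	\frac{\norm{\bext(\cdot,\sigma)}_{\Vext'}}{\norm{b(\cdot,\sigma)}_{S'}},
\end{equation*}
which is the claimed identity.

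Because each step is a direct invocation of a preceding lemma, I do not anticipate a real obstacle. The only mild subtlety is ensuring that the quotients make sense, i.e.\ that $\norm{b(\cdot,\sigma)}_{S'}>0$ for $\sigma\neq0$; this is precisely the nondegeneracy of $b$ established in Lemma \ref{L:b-duality}, so the supremum on the right-hand side is well-defined (with the convention that the $\sigma=0$ contribution is $0$).
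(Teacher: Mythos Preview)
Your proof is correct and follows essentially the same approach as the paper: both apply Lemma~\ref{L:b-duality} to rewrite $\norm{\Appext\vext}$ as a supremum over $\sigma$, use the identity $\bext(\vext,\sigma)=b(\Appext\vext,\sigma)$ (which the paper phrases via the generalized Galerkin orthogonality of Lemma~\ref{L:Pext_and_bext}), and then interchange the two suprema to recognise the extended dual norm. A minor remark: swapping two suprema is always valid and needs no sign hypothesis, so your nonnegativity justification is superfluous but harmless.
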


\begin{proof}
Applying Lemma \ref{L:b-duality}, the generalized Galerkin orthogonality of Lemma~\ref{L:Pext_and_bext} and the definition of the extended dual norm, we infer
\begin{align*}
 \opnorm{\Appext}{\Vext}{\Vext}
 &=
 \sup_{\vext\in\Vext} \frac{\norm{\Appext\vext}}{\norm{\vext}}
 =
 \sup_{\vext\in\Vext,\sigma\in S}
	\frac{b(\Appext\vext,\sigma)}{\norm{\vext}\norm{\sigma}_b}
 =
 \sup_{\vext\in\Vext,\sigma\in S}
	\frac{\bext(\vext,\sigma)}{\norm{\vext}\norm{\sigma}_b}
\\
 &=
 \sup_{\sigma\in S}
	\frac{\norm{\bext(\cdot,\sigma)}_{\Vext'}}{\norm{\sigma}_b}
 =
 \sup_{\sigma\in S}
	\frac{\norm{\bext(\cdot,\sigma)}_{\Vext'}}{\norm{b(\cdot,\sigma)}_{S'}}.
\qedhere
\end{align*}
\end{proof}

Before closing this subsection, two remarks are in order.
\begin{remark}[Alternative proof and formula]
\label{R:alternative-formula}
An alternative proof of Lemma \ref{L:Pext=bext} may be based on a continuous counterpart of $\norm{\cdot}_b$ from Lemma \ref{L:b-duality}; see Tantardini and Veeser \cite[Theorem 2.1]{Tantardini.Veeser:16}.  Using that approach, one derives also
\begin{equation*}
 \opnorm{\Appext}{\Vext}{\Vext}
 =
 \sup_{s\in S, \norm{s}=1} \; \infimum_{\sigma \in S, \norm{\sigma}=1}
	\frac{\|\bext(\cdot,\sigma)\|_{\Vext'}}{|b(s,\sigma)|}.
\end{equation*}	
by duality.
\end{remark}

\begin{remark}[Reformulations of quasi-optimality]
\label{R:qopt}
Remarks \ref{R:FullStability} and \ref{R:QuasiOptRequiresFullConsistency},	Lem\-ma\-ta~\ref{L:Pext} and \ref{L:Pext_and_bext} as well as Theorem \ref{T:qopt} show that the following statements are equivalent reformulations of quasi-optimality for a nonconforming method $\app=(S,b,L)$ with approximation operator $\App$:
\begin{subequations}
\label{char-qopt}
\begin{align}
 \label{char:fs-fc} 
 &\text{$\app$ is fully algebraically consistent and fully stable.}
\\ \label{char:P} 
 &\text{$Ps=s$ for all $s \in S\cap V$ and $P$ is bounded.}
\\ \label{char:Pext} 
 &\text{$P$ extends to a linear projection $\Appext$ from $\Vext$ onto $S$ that is bounded.}
\\ \label{char:bext} 
 &\text{$b$ and $\langle LA\cdot,\cdot\rangle$ have a common extension $\bext$ that is bounded.}
\\
 &\text{$\App$ is bounded and $b,\App$ have extensions $\bext,\Appext$ such that $\bext(\vext-\Appext\vext,\sigma)=0$}
\\ \nonumber &\text{for all $\vext\in\Vext$ and $\sigma \in S$.}
\end{align}
\end{subequations}
It is worth observing that no additional regularity beyond the natural one in \eqref{ex-prob} is involved.  All this illustrates that extensions, as developed in our approach, are a well-tuned tool in the analysis of the quasi-optimality of nonconforming methods. 
\end{remark}

\subsection{The structure of full stability}
\label{S:stab-smooting}
%
In this subsection we determine the structure of nonconforming methods that are fully stable. 

To this end, \eqref{adjoint-for-bdd-op} and the following facts of linear functional analysis will be basic: if $X$ and $Y$ are normed linear spaces and $T:X\to Y$ linear, then
\begin{gather}
\label{dim-and-bdd}
 \dim X < \infty \iff \text{all linear operators $X \to Y$ are bounded},
\\
\label{T*surjective-iff-Tinjective}
 \text{if $\dim X < \infty$, then }
 T^\star \text{ surjective} \iff T \text{ injective}, 
\end{gather}
see, e.g., \cite{Brezis:11} and \cite[p.\ 1418]{Buckholtz:00}.

Let $\app=(S,b,L)$ be a nonconforming method and recall that $\app$ is fully stable if and only if the operator $\app:V' \to S$ is bounded, where $V'$ and $S$ are equipped, respectively, with the dual and extended energy norm.

We claim that the full stability of $\app$ hinges on the boundedness of $L$.  In light of Remark~\ref{R:FullStability}, we may assume that $D(\app)=D(L)=V'$.  The equivalence \eqref{dim-and-bdd} yields the following two consequences. First, the boundedness of $M:V'\to S$ is a true requirement, because its domain $V'$ has infinite dimension. Second, the critical operator in the composition $\app = B^{-1} L$ from \eqref{M=} is $L$.  In fact, its domain $V'$ has infinite dimension, while the domain $S'$ of $B^{-1}$ has finite dimension. Consequently, a method $M$ is fully stable if and only if it is entire and the operator $L:V' \to S'$ is bounded.

Next, we characterize the class of bounded linear operators from $V'$ to $S'$ and derive first a necessary condition.  Let $L:V' \to S'$ be linear and bounded.  Owing to \eqref{adjoint-for-bdd-op}, its adjoint $L^\star$ is a bounded linear operator from $S''$ to $V''$.  Since the spaces $S$ and $V$ are reflexive, we thus deduce the existence of a linear operator $E:S\to V$ such that
\begin{equation}
\label{L=E*}
 \forall \ell \in V', \sigma \in S
\quad
 \left\langle L\ell, \sigma\right\rangle 
 =
 \left\langle \ell, E\sigma\right\rangle .
\end{equation}
Conversely, if $E:S\to V$ is a linear operator satisfying \eqref{L=E*}, then $L$ is bounded on $V'$ with $\opnorm{L}{V'}{S'} = \opnorm{E}{S}{V}$ by \eqref{adjoint-for-bdd-op} and \eqref{dim-and-bdd}.

%
\begin{remark}[Smoothing of $E$]
\label{R:smoothing}
Usually, the nonconformity $S \not\subseteq V$ arises from a lack of smoothness, e.g., across interelement boundaries in the case of finite element methods.  The operator $E:S \to V$ may then be viewed as a smoothing operator.
\end{remark}

The above observations prepare the following result, which is our first step towards the structure of quasi-optimal methods.

\begin{theorem}[Full stability and smoothing]
\label{T:fstab-smoothing}
A nonconforming method $\app = (S,b,L)$ for \eqref{ex-prob} is fully stable if and only if $L$ is the adjoint of a linear smoothing operator $E:S \to V$.
	
The discrete problem for $\ell \in V'$ then reads
\begin{equation}
\label{disc-prob-with-smoothing}
 \forall \sigma \in S
\quad
	b(\app\ell, \sigma)
	=
	\left\langle \ell, E\sigma\right\rangle
\end{equation} 
and the stability constant satisfies
\begin{equation}
\label{Cstab-with-smoothing}
 \Cstab
 =
 \opnorm{\app}{V'}{S}
 =
 \sup_{\sigma\in S} \, \frac{\norm{E\sigma}}{\norm{b(\cdot,\sigma)}_{S'}}.
\end{equation}
Moreover, the range of $M$ is $S$ if and only if $E$ is injective.
\end{theorem}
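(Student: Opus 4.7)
The plan is to assemble the theorem from the ingredients already prepared in the paragraph immediately preceding it, rather than to begin from scratch. First I would dispatch the main equivalence. By Remark~\ref{R:FullStability}, full stability forces $D(M) = D(L) = V'$, so I may work on all of $V'$. The factorization $M = B^{-1}L$ from \eqref{M=} together with the automatic boundedness of $B^{-1} \colon S' \to S$ (whose domain is finite-dimensional, see \eqref{dim-and-bdd}) reduces the boundedness of $M \colon V' \to S$ to that of $L \colon V' \to S'$. The bounded linear operators from $V'$ to $S'$ have in turn been identified in the text just before the theorem, via \eqref{adjoint-for-bdd-op} and the reflexivity of $V$ (plus finite-dimensionality of $S$), with the adjoints $L = E^\star$ of linear maps $E \colon S \to V$, and with equality of operator norms. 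This gives the first equivalence; substituting $\langle L\ell, \sigma \rangle = \langle \ell, E\sigma \rangle$ into \eqref{disc-prob} then yields \eqref{disc-prob-with-smoothing} at once.

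Next I would prove the formula for $\Cstab$. Starting from $\Cstab = \opnorm{M}{V'}{S}$, the plan is to apply the $b$-duality identity of Lemma~\ref{L:b-duality} to $s = M\ell$, substitute the discrete problem \eqref{disc-prob-with-smoothing} in the numerator, and then swap the two suprema. The inner supremum over $\ell \in V'$ of $\langle \ell, E\sigma \rangle / \norm{\ell}_{V'}$ equals $\norm{E\sigma}$ by the isometry recorded in \eqref{isometry}, while $\norm{\sigma}_b$ is by construction $\norm{b(\cdot,\sigma)}_{S'}$. Combining these two observations produces exactly \eqref{Cstab-with-smoothing}.

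Finally, for the range statement, Remark~\ref{R:surjectivity-of-L} says $R(M) = S$ if and only if $L$ is surjective. Since $L = E^\star$ with $E \colon S \to V$ and $\dim S < \infty$, the duality \eqref{T*surjective-iff-Tinjective} applied to $T = E$ gives that $L$ is surjective exactly when $E$ is injective. I expect no real obstacle: the only step requiring any care is the reflexive identification $S'' \simeq S$ and $V'' \simeq V$ that makes the pre-adjoint of $L$ canonically an operator $S \to V$; apart from this bookkeeping, the entire proof is a direct assembly of facts already established in the excerpt.
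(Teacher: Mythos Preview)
Your proposal is correct and follows essentially the same route as the paper: the preceding observations for the main equivalence and the form of the discrete problem, Lemma~\ref{L:b-duality} plus duality for the $\Cstab$ formula, and Remark~\ref{R:surjectivity-of-L} together with \eqref{T*surjective-iff-Tinjective} for the range statement. The only cosmetic difference is that the paper justifies $\sup_{\ell}\langle \ell, E\sigma\rangle/\norm{\ell}_{V'} = \norm{E\sigma}$ by directly invoking $\norm{v} = \sup_{\norm{\ell}_{V'}=1}\langle \ell, v\rangle$ (citing Brezis) rather than routing through the Riesz isometry \eqref{isometry}, but the content is the same.
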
 

\begin{proof}
The observations preceding Theorem \ref{T:fstab-smoothing} show that $\app$ is fully stable if and only if $L$ is the adjoint of a linear smoothing operator $E:S\to V$. Moreover, they provide the claimed form of the discrete problem via \eqref{L=E*}. The second equivalence readily follows from \eqref{T*surjective-iff-Tinjective} and Remark \ref{R:surjectivity-of-L}.

To verify \eqref{Cstab-with-smoothing}, we combine Lemma \ref{L:b-duality} with $\norm{v} = \sup_{\ell\in V', \norm{\ell}_{V'}=1} \langle \ell,v \rangle$, see, e.g., Brezis \cite[Corollary 1.4]{Brezis:11}:
	\begin{align*}
	\Cstab
	&=
	\opnorm{\app}{V'}{S}
	=
	\sup_{\ell\in V'} \frac{\norm{\app\ell}}{\norm{\ell}_{V'}}
	=
	\sup_{\ell \in V', \sigma \in S}
	\frac{b(\app\ell,\sigma)}{\norm{\ell}_{V'}\norm{\sigma}_b}
	\\
	&=
	\sup_{\sigma \in S, \ell \in V'}
	\frac{\langle\ell,E\sigma\rangle}{\norm{\ell}_{V'}\norm{\sigma}_b}
	=
	\sup_{\sigma \in S}
	\frac{\norm{E\sigma}}{\norm{\sigma}_b}
	=
	\sup_{\sigma \in S}
	\frac{\norm{E\sigma}}{\norm{b(\cdot,\sigma)}_{S'}}.
	\qedhere
	\end{align*}
\end{proof}

Let us start the discussion of this result by considering a canonical choice for the smoother $E$.
\begin{remark}[Trivial smoothing for conforming methods]
\label{R:trivial-smoothing-cG}
Assume that the discrete space $S\subseteq V$ is conforming and consider the simplest choice $E=\id_S$. For this classical case, \eqref{Cstab-with-smoothing} reduces to the well-known identity
\begin{equation*}
	\Cstab
	=
	\sup_{\sigma \in S} \frac{\norm{\sigma}}{\norm{b(\cdot,\sigma)}_{S'}}
	=
	\left(
	\infimum_{\sigma \in S} \sup_{s \in S} \frac{b(s,\sigma)}{\norm{s}\norm{\sigma}}
	\right)^{-1}
	=
	\left(
	\infimum_{s \in S} \sup_{\sigma \in S} \frac{b(s,\sigma)}{\norm{s}\norm{\sigma}}
	\right)^{-1}.
\end{equation*}
\end{remark}

\begin{remark}[Failure of $\id_S$]
\label{R:failure-of-idS}
Let $S$ be a nonconforming discrete space with $S \not\subseteq V$.  Then the choice $E=\id_S$ is not compatible with full stability and so, in view of Theorem \ref{T:qopt}, not with quasi-optimality.  Indeed, Theorem \ref{T:fstab-smoothing} shows that $E(S) \subseteq V$ is necessary for full stability. Consequently, the condition $Es=s$ entails $s \in S\cap V$ and thus produces a contradiction for any $s\in S\setminus V$. We therefore need to define $Es$ for $s\in S\setminus V$ differently, which, in view of the nature of $S$ and $V$ in applications, typically amounts to some kind of smoothing.  
\end{remark}

Most DG methods and classical NCFEM rely on the simple choice $E=\id_S$, requiring that the load term $\ell$ in \eqref{ex-prob} has some additional regularity. Remark \ref{R:failure-of-idS} implies that these methods are not fully stable and so, in view of Theorem~\ref{T:qopt}, not quasi-optimal. This provides an alternative to falsify quasi-optimality with Remark~\ref{R:qopt->entire}.

We end this subsection by considering first alternatives to $E=\id_S$ and illustrating that the choice of $E$ is in general a delicate matter.
\begin{remark}[Previous uses of smoothing]
\label{R:Previous-smoothers}
Advantages of suitable smoothing have been previously observed. An obvious one is that the method can be made entire and this has been pointed out, e.g., in the DG context by Di Pietro and Ern \cite{Ern.DiPietro:12}.

Comparing the Hellan-Hermann-Johnson method with the Morley method, Ar\-nold and Brezzi \cite{Arnold.Brezzi:85} showed that a particular smoothing in the Morley method leads to an a~priori error estimate requiring less regularity of the underlying load term. This corresponds to an increased stability thanks to the employed smoothing.

Also in the context of fourth order problems, Brenner and Sung \cite{Brenner.Sung:05} proposed $C^0$ interior penalty methods and proved a~priori error estimates also for nonsmooth loads. Furthermore, the involved regularity is minimal from the viewpoint of approximation.

Finally, Badia et al.\ \cite{Badia.Codina.Gudi.Guzman:14} used a rather involved smoother, which is related to our construction in \cite{Veeser.Zanotti:17p2}, to show a partial quasi-optimality result.
\end{remark}

\begin{remark}[Smoothers into $S\cap V$]
\label{R:qo-Lsurjective}
It may look natural to use smoothers $E$ that map into the conforming part $S\cap V$ of the discrete space.  In view of Remark \ref{R:surjectivity-of-L}, the range $R(M)$ of the corresponding method is a proper subspace of $S$, whenever $S \setminus V \neq \emptyset$. Quasi-optimality is then not ruled out, but it hinges on the validity of results like Corollary 1 in Veeser \cite{Veeser:16} and requires in particular that $S \cap V$ is not small.
\end{remark}

\begin{remark}[Optimal smoothing]
\label{R:Optimal-smoothing}
The structure of full stability does not principally exclude methods that are optimal from the viewpoint of approximation. Consequently, the variational crime of nonconformity does not necessarily result in some consistency error. 
To see this, consider the discrete bilinear form $b = \aext_{|S\times S}$.  Since
\begin{equation*}
 \forall v \in V, \sigma \in S
\quad
 \aext(\App v - v, \sigma)
 =
 \aext(v, E\sigma - \sigma),
\end{equation*}
we have
\begin{equation*}
 \App = \Ritz_S
 \iff
 E = \Ritz_V.
\end{equation*} 
In other words: a nonconforming method $(S,\aext_{S\times S}, E^\star)$ provides the best approximation if and only if the smoother $E$ is the $\aext$-orthogonal projection onto $V$.  This smoother is however not feasible in the sense of the following remark.
\end{remark}

\begin{remark}[Feasible smoothing]
\label{R:comput-E}
Adopt the notation of Remark \ref{R:ComputingDiscreteSolutions} and let $\varphi_1,\dots,\varphi_n$ be a computionally convenient basis for the discrete bilinear form $b$.  In order to compute $\app\ell$ by \eqref{disc-prob-with-smoothing} with optimal complexity, the total number of operations for evaluating $\langle \ell, E\varphi_i\rangle$ for all $i=1,\dots,n$ has to be of order $O(n)$.  A sufficient condition for this is that, for each $i=1,\dots,n$,  the function $E\varphi_i$ is locally supported so that $\langle \ell, E\varphi_i\rangle$ can be evaluated at cost $O(1)$. 
\end{remark}

\subsection{The structure of quasi-optimality}
\label{S:qopt-smoothing}
%
We are finally ready for the main results of our abstract analysis about the quasi-optimality of nonconforming methods. 

\begin{theorem}[Quasi-optimality and smoothing]
\label{T:qopt-smoothing}
A nonconforming method $\app=(S,b,L)$ for \eqref{ex-prob} is quasi-optimal if and only if there exists a linear smoothing operator $E:S \to V$ such that the discrete problem reads
\begin{equation*}
 \forall \sigma \in S
\quad
 b(M\ell,\sigma) = \langle \ell, E\sigma \rangle
\end{equation*}
for any $\ell \in V'$ and 
\begin{equation}
\label{fa-consistency-with-E}
 \forall u \in S\cap V, \sigma \in S
\quad
 b(u, \sigma)
 =
 a(u, E\sigma).  
\end{equation}
Its quasi-optimality constant is given by
\begin{equation}
\label{Cqopt=;smoothing}
 \Cqopt
 =
 \sup_{\sigma \in S} \,
  \frac{ \sup_{\norm{v+s} = 1} a(v,E\sigma) + b(s,\sigma) }
   {\sup_{\norm{s} = 1} b(s,\sigma)},
\end{equation}
where $v$ varies in $V$ and $s$ in $S$.
\end{theorem}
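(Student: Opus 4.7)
The plan is to assemble this statement directly from the machinery already developed. Quasi-optimality has been characterized in Theorem~\ref{T:qopt} as the conjunction of full algebraic consistency and full stability, and full stability in turn has been translated into the language of smoothers in Theorem~\ref{T:fstab-smoothing}. What remains is to rewrite full algebraic consistency through the smoother and to derive the formula~\eqref{Cqopt=;smoothing} via Lemma~\ref{L:Pext=bext}.

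First I would establish the structural equivalence. By Theorem~\ref{T:fstab-smoothing}, full stability of $\app=(S,b,L)$ is equivalent to the existence of a linear smoother $E:S\to V$ with $L=E^\star$, in which case the discrete problem assumes the form $b(\app\ell,\sigma)=\langle\ell,E\sigma\rangle$. Granted this, I would translate Definition~\ref{D:cons}: since $E\sigma\in V$ and $\langle LAu,\sigma\rangle=\langle Au,E\sigma\rangle=a(u,E\sigma)$ for every $u\in V$, the condition \eqref{Consistency} turns into exactly \eqref{fa-consistency-with-E}. Combined with Theorem~\ref{T:qopt}, this yields the announced characterization of quasi-optimality in terms of $E$.

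Second, I would derive the constant formula. Theorem~\ref{T:qopt} gives $\Cqopt=\opnorm{\Appext}{\Vext}{\Vext}$, and Lemma~\ref{L:Pext=bext} rewrites this as
\[
\Cqopt
=
\sup_{\sigma\in S}
\frac{\|\bext(\cdot,\sigma)\|_{\Vext'}}{\norm{b(\cdot,\sigma)}_{S'}}.
\]
The explicit form of $\bext$ comes from its defining formula \eqref{bext}: for $\vext=v+s$ with $v\in V$ and $s\in S$, the identity $L=E^\star$ yields
\[
\bext(\vext,\sigma)
=
\langle LAv,\sigma\rangle+b(s,\sigma)
=
a(v,E\sigma)+b(s,\sigma).
\]
Unwinding the definition of the extended dual norm on $\Vext'$ as a supremum over $\vext=v+s$ with $\norm{v+s}=1$, and of $\norm{\cdot}_{S'}$ as a supremum over $s\in S$ with $\norm{s}=1$, one recovers \eqref{Cqopt=;smoothing} verbatim.

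There is no substantial obstacle to overcome, since the heavy lifting has been done in Theorems~\ref{T:qopt} and~\ref{T:fstab-smoothing} and in Lemma~\ref{L:Pext=bext}; the proof is essentially an orchestration. The only point needing care is that the well-definedness of $\bext$ on $\Vext\times S$ (Lemma~\ref{L:ConsistencyWithExtension}) presupposes full algebraic consistency, but this is precisely the hypothesis \eqref{fa-consistency-with-E} already established in the first step, so the two decompositions of any $\vext\in\Vext$ lead to the same value of $\bext(\vext,\sigma)$ and the formula for $\Cqopt$ is unambiguous.
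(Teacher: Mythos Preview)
Your proposal is correct and follows essentially the same approach as the paper: you reduce the equivalence to Theorem~\ref{T:qopt} together with Theorem~\ref{T:fstab-smoothing}, translate full algebraic consistency via $\langle LAu,\sigma\rangle=a(u,E\sigma)$, and obtain the constant formula from Theorem~\ref{T:qopt} and Lemma~\ref{L:Pext=bext} after expressing $\bext$ through the smoother. Your closing remark on the well-definedness of $\bext$ is a nice clarification but not strictly needed, since Lemma~\ref{L:ConsistencyWithExtension} already secures this once full algebraic consistency holds.
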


\begin{proof}
We first check the claimed equivalence.  The form of the discrete problem means that $L$ is the adjoint of $E$ and, in view of Theorem \ref{T:fstab-smoothing}, that $M$ is fully stable. Moreover, since	
\begin{equation}
\label{LA-e-E}
\langle LAu, \sigma \rangle
=
\langle Au,E\sigma \rangle
=
a(u, E\sigma)
\end{equation}
for all $u\in V$ and $\sigma \in S$, \eqref{fa-consistency-with-E} is equivalent to \eqref{Consistency}, i.e. full algebraic consistency.  Consequently, the claimed equivalence follows from Theorem~\ref{T:qopt}.

To show the identity for the quasi-optimality constant, we observe that the extension $\bext$ exists and satisfies, for $\vext\in\Vext$, $v \in V$, $s,\sigma \in S$ such that $\vext = v + s$,
\begin{equation*}
 \bext(\vext, \sigma)
 =
 \langle LAv,\sigma \rangle + b(s,\sigma)
 =
 a(v, E\sigma) +  b(s,\sigma)  
\end{equation*}
thanks to \eqref{LA-e-E}.  Therefore, the formula for $\Cqopt$ follows from Theorem \ref{T:qopt} and Lemma \ref{L:Pext=bext}.
\end{proof}

We start the discussion of Theorem \ref{T:qopt-smoothing} by a remark about the notion of Galerkin methods.

\begin{remark}[Galerkin methods]
\label{R:nG}
Assume first that the discrete space $S\subseteq V$ is conforming. Then trivial smoothing $E=\id_S$ in \eqref{fa-consistency-with-E} yields $b = a_{|S \times S}$. In other words: conforming Galerkin methods are the only quasi-optimal methods with the simplest choice $E=\id_S$ for smoothing. 

Next, consider a general nonconforming discrete space $S$, together with the simplest choice for smoothing in the conforming part $S\cap V$, i.e.\ with $E_{|S\cap V} = \id_{S\cap V}$. Here \eqref{fa-consistency-with-E} yields $b_{|S_C \times S_C} = a_{|S_C \times S_C}$ with $S_C = S \cap V$. Thus, nonconforming Galerkin methods are the only candidates for quasi-optimal methods with $E_{|S\cap V} = \id_{S\cap V}$. In this context, the following observation if useful in constructing $E$ with $E_{|S\cap V} = \id_{S\cap V}$. If $E$ maps some $s \in S\setminus V$ in $S\cap V$, then the injectivity of $E$ is broken and, in view of Theorem \ref{T:fstab-smoothing}, the range of the method is a strict subspace of $S$.
\end{remark}

\begin{remark}[Comparison with second Strang lemma]
\label{R:2nd-Strang}
For conforming Galerkin methods, Theorem \ref{T:qopt-smoothing} reduces to the well-known C\'ea lemma, with $\Cqopt=1$.  C\'ea's lemma is a basic building block in the analysis of the energy norm error for conforming methods. In the context of nonconforming methods, the second Strang lemma is often used as a replacement.  Theorem \ref{T:qopt} provides a specialization revealing the structure of quasi-optimal methods and so lays the groundwork for their design.
\end{remark}

\begin{remark}[Comparison with conforming Petrov-Galerkin methods]
\label{R:Petrov-Galerkin}
Our setting of \S\ref{S:setting} includes the application of Petrov-Galerkin methods to \eqref{ex-prob}.  It is therefore of interest to compare formula \eqref{Cqopt=;smoothing} with its conforming counterpart in Theorem~2.1 of Tantardini and Veeser \cite{Tantardini.Veeser:16}:
\begin{equation*}
 \Cqopt
 =
 \sup_{\sigma \in S} \,
  \frac{ \sup_{\norm{v} = 1} b(v,\sigma) }{\sup_{\norm{s} = 1} b(s,\sigma)},   
\end{equation*}
where here $b$ stands for the continuous (and discrete) bilinear form, $v$, $s$, and $\sigma$ vary, respectively, in the continuous trial space, in the discrete trial space and in the discrete test space.  We see that \eqref{Cqopt=;smoothing} generalizes this formula, replacing the continuous bilinear form by the extended one, which interweaves discrete and continuous problems.
\end{remark}

\begin{remark}[`Classical' bound for quasi-optimality constant]
\label{R:Cqopt}
A consequence of the formula for the quasi-optimality constant in Theorem \ref{T:qopt-smoothing} and \eqref{adjoint-for-bdd-op} is the following upper bound:
\begin{equation}
\label{Cqopt<=b(ext)}
 \Cqopt
 \leq
 \frac{C_{\bext}}{\beta}
\end{equation}
with the continuity and inf-sup constants
\begin{equation*}
 C_{\bext} := \sup_{\norm{v+s}=1, \norm{\sigma} = 1} a(v,E\sigma) + b(s,\sigma),
\qquad
 \beta := \infimum_{\norm{s} = 1} \sup_{\norm{\sigma} = 1} b(s,\sigma),
\end{equation*}
where $v$ varies in $V$ and $s$ and $\sigma$ in $S$. This upper bound has the classical form of constants appearing in quasi-optimality results, apart from the slight difference that the continuity constant of the numerator involves the extended bilinear form; see also Remark \ref{R:Petrov-Galerkin}.

It is worth mentioning that the right-hand side of \eqref{Cqopt<=b(ext)} can become arbitrarily large, while its left-hand side remains bounded; see \cite[Remark~2.7]{Veeser.Zanotti:17p2}.
\end{remark}

Let us now assess what determines the size of the quasi-optimality constant.

\begin{theorem}[Size of quasi-optimality constant]
\label{T:Cqopt}
Assume $M=(S,b,L)$ is a quasi-optimal nonconforming method with linear smoother $E:S \to V$ and stability constant $\Cstab$. The consistency measure $\delta_V$ of Proposition \ref{P:consistency-with-stability} is finite and is
\begin{equation}
\label{deltaV=}
 \delta_V
 =
 \sup_{v \in V, \Ritz_S v \neq v} \, \sup_{\sigma \in S} \,
  \frac{ b(\Ritz_S v, \sigma) - a(v, E\sigma)}
   {\norm{ \Ritz_S v - v} \norm{b(\cdot,\sigma)}_{S'}}.
\end{equation}
Similarly, the consistency measure $\delta_S$ of Proposition \ref{P:consistency-without-stability} is finite and the smallest positive constant such that
\begin{equation*}
 \forall s \in S
\quad
 \sup_{\sigma \in S}
  \frac{ b(s,\sigma) - a(\Ritz_V s, E\sigma) }{ \norm{b(\cdot,\sigma)}_{S'}}
 \leq
 \delta_S
 \norm{ s - \Ritz_V s}.
\end{equation*}
Then the quasi-optimality constant of $M$ satisfies
\begin{equation*}
 \max \{\Cstab, \delta_S \}
 \leq
 \Cqopt
 =
 \sqrt{1+\delta_V^2}
 \leq
 \sqrt{\Cstab^2+\delta_S^2}.
\end{equation*}
\end{theorem}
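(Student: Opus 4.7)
The plan is to read off the quasi-optimality bounds directly from Propositions \ref{P:consistency-with-stability} and \ref{P:consistency-without-stability}, and then to translate the abstract definitions of $\delta_V$ and $\delta_S$ into the claimed explicit formulas using Lemma \ref{L:b-duality} together with the representation of the discrete problem obtained in Theorem \ref{T:qopt-smoothing}. The two-sided bound
\[
 \max\{\Cstab,\delta_S\} \leq \Cqopt = \sqrt{1+\delta_V^2} \leq \sqrt{\Cstab^2 + \delta_S^2}
\]
is nothing but the combination of the two propositions, once finiteness of $\delta_V$ and $\delta_S$ is ensured. Since $M$ is quasi-optimal, Remarks \ref{R:FullStability} and \ref{R:QuasiOptRequiresFullConsistency} give that $P$ is a bounded linear operator on $V$ with $Ps=s$ for $s \in S\cap V$, so $\Ritz_S - P$ and $\id_S - P\Ritz_V$ are bounded on $V$ and $S$, respectively, from which $\delta_V,\delta_S < \infty$.

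For the explicit formula for $\delta_V$, I would fix $v \in V$ with $\Ritz_S v \neq v$ and rewrite the energy norm of $\Ritz_S v - Pv \in S$ via the $b$-duality of Lemma \ref{L:b-duality}:
\[
 \norm{\Ritz_S v - P v}
 =
 \sup_{\sigma \in S}
  \frac{b(\Ritz_S v - Pv,\sigma)}{\norm{b(\cdot,\sigma)}_{S'}}.
\]
By Theorem \ref{T:qopt-smoothing} the discrete problem reads $b(Pv,\sigma) = \langle Av, E\sigma\rangle = a(v,E\sigma)$, so $b(\Ritz_S v - Pv,\sigma) = b(\Ritz_S v,\sigma) - a(v,E\sigma)$. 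Dividing by $\norm{v-\Ritz_S v}$ and taking the supremum over admissible $v$ gives exactly \eqref{deltaV=}.

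For $\delta_S$, I would proceed in the same manner: for $s \in S$ apply Lemma \ref{L:b-duality} to $s - P\Ritz_V s \in S$ and use $b(P\Ritz_V s,\sigma) = a(\Ritz_V s, E\sigma)$ to obtain
\[
 \norm{s - P\Ritz_V s}
 =
 \sup_{\sigma \in S}
  \frac{b(s,\sigma) - a(\Ritz_V s, E\sigma)}{\norm{b(\cdot,\sigma)}_{S'}}.
\]
The defining inequality of $\delta_S$ then becomes the one stated in the theorem, and the fact that $\delta_S$ is the smallest positive constant with that property is inherited from Proposition \ref{P:consistency-without-stability}.

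There is no real obstacle here beyond keeping the bookkeeping straight: the one point to handle with care is the identification $\langle LA\cdot,\sigma\rangle = a(\cdot,E\sigma)$, which requires that full stability (and hence the existence of the smoother $E$ via Theorem \ref{T:fstab-smoothing}) be invoked before rewriting the consistency measures. Once that is in place, the explicit formulas follow mechanically from $b$-duality, and the numerical bound on $\Cqopt$ is immediate from the two propositions applied to the (already fully stable and fully algebraically consistent) method $M$.
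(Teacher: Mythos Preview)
Your proposal is correct and mirrors the paper's own proof: both apply Lemma~\ref{L:b-duality} to rewrite $\norm{\Ritz_S v - Pv}$ and $\norm{s - P\Ritz_V s}$ in dual form, use the identity $b(Pv,\sigma)=a(v,E\sigma)$ coming from $L=E^\star$, and then simply invoke Propositions~\ref{P:consistency-with-stability} and \ref{P:consistency-without-stability}. One small caveat: boundedness of $\Ritz_S-P$ on $V$ by itself does not yield $\delta_V<\infty$ (it bounds by $\norm{v}$, not by $\norm{v-\Ritz_S v}$), but finiteness is in any case immediate from $\Cqopt<\infty$ via the propositions, so this does not affect your argument.
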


\begin{proof}
Lemma \ref{L:b-duality} readily yields the identities
\begin{equation*}
 \norm{\Ritz_S v - \App v}
 =
 \sup_{\sigma \in S} \frac{ b(\Ritz_S v - \App v, \sigma) }{ \norm{\sigma}_b }
\quad\text{and}\quad
 \norm{ s - \App\Ritz_V s}
 =
 \sup_{\sigma \in S} \frac{ b(s - \App\Ritz_V s, \sigma) }{ \norm{\sigma}_b }.
\end{equation*}
Notice also $b(\App v, \sigma) = b(\app Av, \sigma) = \langle LAv, \sigma \rangle = a(v,E\sigma)$ and $\norm{\sigma}_b = \norm{b(\cdot,\sigma)}_{S'}$ for $v \in V$ and $\sigma \in S$ as well as $V \setminus S \neq \emptyset$. Therefore, $\delta_V$ and $\delta_S$ coincide with the corresponding quantities in Propositions \ref{P:consistency-with-stability} and \ref{P:consistency-without-stability} and Theorem \ref{T:Cqopt} just restates their conclusions.
\end{proof}

We refer to \S\ref{S:Cqopt} for a discussion of the relationship between $\Cqopt$ and $\Cstab$ and in particular the consistency measures $\delta_V$ and $\delta_S$. Let us further connect the expression of $\delta_V$ in this theorem with classical consistency. 

\begin{remark}[$\delta_V$ and classical consistency error]
\label{R:consistency error}
The numerator of \eqref{deltaV=} represents the action of a linear functional on $S$, namely
\begin{equation*}
b(\Ritz_S v, \sigma) - a(v, E \sigma)
=
\left\langle B \Ritz_S v - LA v, \sigma \right\rangle
=: 
\left\langle \rho, \sigma \right\rangle.  
\end{equation*}
Let us recall that $LAv$ is the discrete load associated to $v$ in problem \eqref{disc-prob} and $B\Ritz_S v$ is the linear functional obtained from the representative $\Ritz_S v$ of $v$ in $S$, through the isomorphism $B$. Introducing the norm $\norm{\cdot}_{S',b} := \sup_{\norm{b(\cdot, \sigma)}_{S'} =1} \left\langle \cdot, \sigma \right\rangle$, the quantity $\norm{\rho}_{S', b}$ is a consistency error in the sense of Arnold \cite{Arnold:15}. The measure $\delta_V$ compares this quantity with the natural benchmark in the context of quasi-optimality, i.e.\ the best error $\norm{v-\Ritz_S v}$.    
\end{remark}

Given $S$ and $b$, Theorem \ref{T:qopt-smoothing} reduces the construction of quasi-optimal nonconforming methods to the choice of a computationally feasible linear smoother $E$ and Theorem \ref{T:Cqopt} shows how the smoother $E$ affects the size of the quasi-optimality constant. In the follow-ups \cite{Veeser.Zanotti:17p2,Veeser.Zanotti:17p3} of this work, we devise such smoothers for various nonconforming finite element spaces.  Modifying classical NCFEM (like the Crouzeix-Raviart method), we can obtain $\delta_S=0$ and so $\Cqopt = \Cstab$, as for conforming Galerkin methods. Also DG and $C^0$ interior penalty methods can be modified to be quasi-optimal. Remarkably, additional terms not affecting full algebraic consistency entail $\delta_S > 0$ for the employed smoothing.

%
%

%
%
%
\end{document}